\newtheorem{theorem}{Theorem}
\newtheorem{definition}{Definition}	
\newtheorem{remark}{Remark}
\newtheorem{cor}{Corollary}
\newtheorem{prop}{Proposition}
\newtheorem{example}{Example}
\def\K{{\mathcal{ K }}}
\def\k{{\kappa}}
\def\l{{\lambda}}
\def\age{{\rm{ Age }}}
\def\aut{{\rm{ Aut }}}
\def\lo{{\rm{ LO }}}
\def\homeo{{\rm{ Homeo }}}
\title{More Universal Minimal Flows of Groups of Automorphisms of Uncountable Structures}
\author{Dana Barto\v{s}ov\'a}
\address{Department of Mathematics\\
University of Toronto\\
Bahen Center\\
40 St. George St.\\
Toronto\\
Ontario\\
Canada\\
M5S 2E4}
\email{dana.bartosova@utoronto.ca}
\address{Faculty of Mathematics and Physics\\
Charles University in Prague\\
Ke Karlovu 3\\
121 16 Prague\\
Czech Republic}
\thanks{This project was partially supported by the grant GAUK 66509}
\subjclass[2000]{37B05,03E02,05D10,22F50,54H20}
\begin{document}
\maketitle

\textbf{Abstract.} In this paper, we compute universal minimal flows of groups of automorphisms of uncountable $\omega$-homogeneous graphs, $K_n$-free graphs, hypergraphs, partially ordered sets, and their extensions with an $\omega$-homogeneous ordering. We present an easy construction of such structures, expanding the jungle of extremely amenable groups.

\section{Introduction}
This paper is a continuation of \cite{DB}, where we used the ultrafilter flow to compute universal minimal flows of automorphism groups of uncountable  structures acting transitively on isomorphic finite substructures and such that the class of finite substructures satisfies the Ramsey property and admits an appropriate extension to a class of linearly ordered structures (e.g. Boolean algebras, vector spaces over a finite field, linearly ordered sets etc.). Here, we apply this result for linearly ordered structures to generalize techniques of Kechris, Pestov and Todorcevic in \cite{KPT} to compute universal minimal flows of more automorphism groups of uncountable structures (e.g. (linearly ordered) graphs, hypegraphs, posets etc.).


In the first section, we remind the  reader of the basic notions from topological dynamics and groups of automorphisms of structures. We recall a theorem characterizing extremely amenable groups of automorphisms  and give new examples. If $G$ is a group of automorphisms of a structure $A,$ we can assume that $A$ is $\omega$-homogeneous, i.e. every partial isomorphism of finitely generated substructures of $A$ can be extended to a full isomorphism of $A$ (see \cite{H}).
 As in \cite{KPT}, we consider the action of  $G$  on the compact space $\lo(A)$ of all linear orderings on $A.$ For a linear ordering $<\in\lo(A),$ we denote by $\overline{G<}$ the topological closure of the orbit $G<=\{g<:g\in G\}$ of $<$ under the action of $G$ in the space $\lo(A).$ If there exists a linear ordering $<\in\lo(A)$ making $(A,<)$ $\omega$-homogeneous, then as in Theorem 7.4 of \cite{KPT} minimality of the space $\overline{G<}$ corresponds to the ordering property for the class of finite substructure of $(A,<)$ that has been isolated in \cite{KPT}. In accordance with Theorem 7.5 in \cite{KPT}, universality of $\overline{G<}$ is then implied by the class of finite substructures of $(A,<)$ satisfying the Ramsey property.

In the next two sections, we show that there are many structures in every uncountable cardinality for which we can compute the universal minimal flow of their groups of automorphisms.
 
In the second section, we recall J\'osson's construction of universal homogeneous structures of cardinalitites $\kappa$ whenever $\kappa^{<\kappa}=\kappa.$ We show  that results of \cite{KPT} can be extended from Fra\"iss\'e structures to J\'onsson structures without any obstacles and compute universal minimal flows of groups of automorphisms of universal homogeneous graphs, $K_n$-free graps, hypergraphs, $\mathcal{A}$-free hypergraphs and posets of the relevant cardinality. 

In the last section, we fill in the gap given by the restriction in J\'onsson's construction on cardinality. Given a graph ($K_n$-free graph, hypergraph, $\mathcal{A}$-free hypergraph, poset) of an arbitrary cardinality, we find a superstructure of the same type and the same cardinality that is $\omega$-homogeneous with an $\omega$-homogeneous linear ordering to which  we apply results from the first section to compute the universal minimal flows of their automorphism groups.


\section{General theory}

Let $G$ be a topological group with identity element $e$ and $X$ a compact Hausdorff space. We say that $X$ is a \emph{$G$-flow} via an action $\pi,$ if $\pi:G\times X\to X$ is a continuous map satisfying $\pi(e,x)=x$ for every $x\in X$ and $\pi(gh,x)=\pi(g,\pi(h,x))$ for every $g,h\in G$ and $x\in X.$ Usually, $\pi$ is understood and we simply say that $X$ is a $G$-flow and write $gx$ in place of $\pi(g,x).$ A $G$-flow $X$ is called \emph{minimal} if there is no closed subspace of $X$ invariant under the ation of $G.$ Equivalently, $X$ is minimal if for every $x\in X$ the \emph{orbit} $Gx=\{gx:g\in G\}$ is dense in $X.$

In what follows, we will be interested in automorphism groups of structures in a countable signature and their dense subgroups. If $A$ is a structure, we  denote its group of automorphisms by $\aut(A).$ We consider $A$ as a discrete space and equip $\aut(A)$ with the topology of pointwise convergence  turning it into a topological group. If $G$ is a subgroup of $\aut(A),$ then the topology on $G$ is generated by pointwise stabilizers of finite substructures of $A,$ where the \emph{pointwise stabilizer} $G_F$ of a substructure $F$ of $A$ is the following clopen subgroup:

$$
G_F=\{g\in G:g(a)=a \ \forall \ a\in F\}.
$$

For a cardinal $\k,$ denote by $S_{\k}$ the group of all bijections on $\k$ with the topology of pointwise convergence. If $A$ is a structure of cardinality $\kappa,$ then $\aut(A)$ is naturally isomorphic to a closed subgroup of $S_{\k}$ (see e.g. \cite{H}).

We say that $A$ is \emph{$\omega$-homogeneous}, if every partial isomorphism between two finitely generated substructures of $A$ can be extended to an automorphism of $A.$

We will identify universal minimal flows of groups of automorphisms of certain structures with spaces of linear orderings. Let us denote by $\lo(\kappa)$ the space of all linear orderings on $\k$ with the topology inherited from the Tychonoff product $2^{\kappa\times\kappa}.$ $\lo(\k)$ is then a compact space and $S_{\k}$ naturally acts on $\lo(\k)$ as follows:

$$
\sigma\in S_{\k}, <\in\lo(\k),\alpha,\beta < \kappa \rightarrow (\alpha (\sigma <) \beta \leftrightarrow \sigma^{-1}(\alpha)<\sigma^{-1} \beta). 
$$

Whenever we talk about an action of a group $G\leq \aut(A),$ where $A$ is a structure of cardinality $\k,$ on the space of linear orderings, we always refer to the above action with $\aut(A)$ identified with a closed subgroup of $S_{\k}.$

It follows from \cite{DB} that whenever $A$ is a structure and $G$ is a dense subgroup of $\aut(A),$ then the universal minimal flow of $G$ is the universal minimal flow of $\aut(A)$ under the restricted action.

We write $B\leq A$ to denote that $B$ is a substructure of $A$ and $\age(A)$ denotes the class of finitely generated substructures of $A$ up to an isomorphism.

In computations of universal minimal flows of groups of automorphisms, the Ramsey property for finite structures turns out to play a crucial role.

\begin{definition}[Ramsey property]
 A class $\mathcal{K}$ of finite structures satisfies the \emph{Ramsey property} if for every $A\leq B\in\mathcal{K}$ and $k\geq 2$ a natural number there exists $C\in\mathcal{K}$ such that 
 $$
 C\to (B)^A_k.
 $$
 It means for every colouring of copies of $A$ in $C$ by $k$ colours, there is a copy $B'$ of $B$ in $C,$ such that all copies of $A$ in $B'$ have the same colour.
\end{definition}

\begin{example} \label{E1}
The following classes of finite structures satisfy the Ramsey property:
\begin{itemize}
\item finite graphs equipped with arbitrary linear orderings,
\item finite $K_n$-free graphs with arbitrary linear orderings for some $n\in \omega,$
\item finite hypergraps of a given signature with arbitrary linear orderings,
\item finite $\mathcal{A}$-free hypergraphs of a given signature with arbitrary linear orderings, 
\item finite posets with linear orderings extending the partial order. 
\end{itemize}

For graph and hypergraph classes see \cite{NR1} and \cite {NR2}, for posets see \cite{MS}.
\end{example}

By a \emph{graph} we mean an unordered graph. A \emph{$K_n$-free graph} is a graph that does not contain the complete graph on $n$-vertices as an iduced subgraph. A \emph{hypergraph} $H$ is a structure in a finite signature $L=\{R_i:i<k\}$ of relational symbols, where each $R_i$ is closed under permutations, i.e. if $R_i$ has arity $l$ and $\sigma$ is a permutation on $\{0,1,\ldots,l-1\},$ then $(h_0,h_1,\ldots,h_{l-1})\in R_i^H\subset H^l$ implies that all $h_0,h_2,\ldots, h_{l-1}$ are distinct and $(h_{\sigma(0)},h_{\sigma(1)},\ldots, h_{\sigma(l-1)})\in R_{i}^H.$ In other words, we can think of $R_i^H$ as a collection of subsets of $H$ of size $l.$ A hypergraph $H$ is called \emph{irreducible} if it contains at least two elements and whenever $x\neq y$ in $H$ then there exists $i<k$  and $S\in R_i^H$ such that $\{x,y\}\subset S.$ Let $\mathcal{A}$ be a class of irreducible hypergraphs in signature $L.$ A hypergraph is \emph{$\mathcal{A}$-free} if no element of $\mathcal{A}$ can be embedded into it. By a \emph{poset}, we mean a partially ordered set. 

We are ready to recall a characterization of extremely amenable subgroups of $S_{\k},$ that is to say subgroups whose universal minimal flow is trivial.

\begin{definition}[Extreme amenability]
A topological group $G$ is called \emph{extremely amenable} if the universal minimal flow of $G$ is a single point.
\end{definition}

The following theorem was proved in \cite{KPT} to characterize extremely amenable groups of automorphisms of countable structures. It was varified in \cite{DB} that it works for uncountable structures as well.

\begin{theorem} \label{T2}
Let $G$ be a subgroup of $S_{\kappa}.$ The following are equivalent:
\begin{itemize}
\item[(a)] $G$ is extremely amenable.
\item[(b)] $(i)$ for every finite $F\subset\kappa,$ $G_F=G_{(F)}$, where $G_{(F)}=\{g\in G: gF=F\}$ and $(ii)$ for every colouring $c:G/G_F\to\{1,2,\ldots, k\}$ and  for every finite $B\supset F,$ there is $g\in G$ and $i\in\{1,2,\ldots,n\}$ such that $c(hG_F)=i$ whenever $h[F]\subset g[B].$
\item[(c)] $(i')$ $G$ preserves an ordering and $(ii)$ as above.
\end{itemize}
\end{theorem}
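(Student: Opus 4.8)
I would prove the implications around the cycle (a) $\Rightarrow$ (c) $\Rightarrow$ (b) $\Rightarrow$ (a); the only substantial link is the equivalence of (a) with condition (ii), and the order clauses (i), (i$'$) will be attached by elementary arguments.

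For the core equivalence (a) $\Leftrightarrow$ (ii) the plan is to invoke the standard dynamical criterion for extreme amenability of non-Archimedean groups. Recall that $G$ is extremely amenable exactly when its greatest ambit has a $G$-fixed point, and that this fixed-point property reformulates, through finite oscillation stability of bounded uniformly continuous functions on $G$, as a Ramsey-type statement about the coset spaces $G/G_F$: for $G\le S_\kappa$ the clopen subgroups $G_F$ with $F\subseteq\kappa$ finite form a neighbourhood base at the identity, so every such function is uniformly approximated by finitely-valued functions that are constant on the cosets of some $G_F$, that is, by colourings $c\colon G/G_F\to\{1,\dots,k\}$. Carrying this reformulation out --- exactly as in \cite{KPT}, whose argument does not use countability, as observed in \cite{DB} --- one arrives at condition (ii): for every finite $F$, every colouring $c$ of $G/G_F$ and every finite $B\supseteq F$, there is $g\in G$ for which all the cosets $hG_F$ with $h[F]\subseteq g[B]$ receive the same colour. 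The passage between ``a monochromatic translate of a prescribed finite subset of $G/G_F$'' and ``a monochromatic family of cosets inside a copy $g[B]$ of $B$'' is the bookkeeping step: in one direction one passes from a finite $T\subseteq G$ to $B=F\cup\bigcup_{t\in T}t[F]$, in the other from $B$ to a set of coset representatives of $\{hG_F:h\in G,\ h[F]\subseteq B\}$, which is finite since it is indexed by the injections $F\hookrightarrow B$ realised in $G$. Thus (a) $\Leftrightarrow$ (ii); this already yields the (ii)-clause of both (b) and (c) from (a), and yields (a) from the (ii)-clause of (b).

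It remains to attach the order clauses. For (a) $\Rightarrow$ (i$'$): the space $\lo(\kappa)$, with the action recalled just before the theorem, is a nonempty compact Hausdorff $G$-flow, so extreme amenability produces a $G$-fixed point, i.e.\ a $G$-invariant linear ordering of $\kappa$. For (i$'$) $\Rightarrow$ (i): if $<$ is a $G$-invariant linear ordering and $g[F]=F$ for a finite $F$, then $g$ restricts to an automorphism of the finite linear order $(F,<)$, hence to the identity on $F$; so $g\in G_F$, and as $G_F\subseteq G_{(F)}$ always, $G_F=G_{(F)}$. This finishes (c) $\Rightarrow$ (b) and, with the previous paragraph, closes the cycle. (One can also prove (i) $\Rightarrow$ (i$'$) directly by a compactness argument: choose a linear ordering on each finite subset of $\kappa$, coherently under restriction and $G$-equivariantly; the only obstruction is an element of $G$ that stabilises a finite set setwise but moves a point of it, which (i) forbids. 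This is not needed for the cycle, but it shows that (i) is redundant once (ii) is assumed.)

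The step I expect to require the most care is the reformulation of extreme amenability as a colouring statement about the spaces $G/G_F$ when $\kappa$ is uncountable, so that $G$ need not be Polish or even metrisable. The tools involved --- the greatest ambit and finite oscillation stability --- are available for arbitrary topological groups, and the reduction to finitely-valued functions uses only that the pointwise stabilisers of finite sets form a neighbourhood base at the identity; hence nothing beyond what \cite{DB} establishes is needed, and everything downstream is finite combinatorics together with the triviality of the automorphism group of a finite linear order.
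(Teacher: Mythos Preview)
The paper does not give its own proof of this theorem: it is stated as a result ``proved in \cite{KPT} to characterize extremely amenable groups of automorphisms of countable structures'' and ``verified in \cite{DB}'' for uncountable structures, with no argument supplied in the text. Your sketch correctly reconstructs the argument from those references --- the oscillation-stability reformulation of extreme amenability, the reduction to colourings of $G/G_F$ via the neighbourhood base $\{G_F\}$, and the elementary derivation of (i$'$) from the fixed point on $\lo(\kappa)$ and of (i) from (i$'$) --- so your approach is exactly what the paper is citing.
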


\begin{remark}
Let $A$ be an $\omega$-homogeneous relational structure such that $G$ is dense in its automorphism group. Since finitely generated substructures of $A$ are finite, $(ii)$ of $(b)$ simply says that $\age(A)$ satisfies the Ramsey property. Whence the following corollary.
\end{remark}

\begin{cor}
Let $A$ be an $\omega$-homogeneous linearly ordered structure with finitely generated substructures being finite and let $G$ be a dense subgroup of $\aut(A).$ Then $G$ is extremely amenable if and only if $\age(A)$ satisfies the Ramsey property.
\end{cor}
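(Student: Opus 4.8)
The plan is to read the corollary off Theorem~\ref{T2}, using the equivalence $(a)\iff(c)$. The first ingredient is free: the signature of $A$ contains a linear order $<^A$, and every $g\in G\le\aut(A)$ preserves $<^A$, so $G$ preserves the ordering $<^A\in\lo(\kappa)$; hence clause $(i')$ of $(c)$ holds with no further hypothesis (it also forces $(i)$ of $(b)$, since a finite linearly ordered set is rigid). What is left is to identify clause $(ii)$ of $(b)$ --- the colouring condition --- with the assertion that $\age(A)$ has the Ramsey property. Once that is done, Theorem~\ref{T2} gives $G$ extremely amenable $\iff$ $(i')\wedge(ii)$ $\iff$ $\age(A)$ is Ramsey, which is the claim.

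To make the identification precise, fix a finite $F\subseteq\kappa$. The coset $hG_F$ depends only on $h|_F$, so $G/G_F$ is in bijection with $\{h|_F:h\in G\}$; by $\omega$-homogeneity of $A$ every embedding of the finite substructure $\langle F\rangle$ into $A$ extends to an automorphism, and by density of $G$ in $\aut(A)$ it is already witnessed by some $g\in G$, so $\{h|_F:h\in G\}=\mathrm{Emb}(\langle F\rangle,A)$. Since $A$, and hence $\langle F\rangle$, is linearly ordered and therefore \emph{rigid}, an embedding of $\langle F\rangle$ into $A$ is determined by its image, so $G/G_F$ is canonically the set $\binom{A}{\langle F\rangle}$ of copies of $\langle F\rangle$ in $A$. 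Under this identification a colouring $c:G/G_F\to\{1,\dots,k\}$ is a $k$-colouring of the copies of $\langle F\rangle$; for finite $B\supseteq F$ the sets $g[B]$, $g\in G$, run over all copies of $\langle B\rangle$ in $A$; and the copies of $\langle F\rangle$ sitting inside a fixed $g[B]$ are exactly the $h[F]$ with $h\in G$ and $h[F]\subseteq g[B]$. Therefore $(ii)$ says precisely: for every $F\le B$ in $\age(A)$ and every $k$, every $k$-colouring of copies of $\langle F\rangle$ in $A$ admits a copy of $\langle B\rangle$ in $A$ all of whose $\langle F\rangle$-subcopies get one colour; i.e. the ``Ramsey property holds inside the structure $A$'' for all pairs from $\age(A)$.

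It remains to match this with the finite Ramsey property of $\age(A)$. One direction is routine: assuming $\age(A)$ Ramsey, and given $F\le B$ in $\age(A)$, $k$, and a colouring $c$ of copies of $\langle F\rangle$ in $A$, pick $C\in\age(A)$ with $C\to(\langle B\rangle)^{\langle F\rangle}_k$ and an embedding $\iota:C\to A$; the colouring $F''\mapsto c(\iota[F''])$ of copies of $\langle F\rangle$ in $C$ yields a monochromatic copy $B''\le C$ of $\langle B\rangle$, whence $\iota[B'']$ is a copy of $\langle B\rangle$ in $A$ with monochromatic $\langle F\rangle$-subcopies, and an appropriate $g\in G$ with $g[B]=\iota[B'']$ exists by $\omega$-homogeneity and density --- this is exactly the $g$ demanded by $(ii)$. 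The converse is the one place I expect real work: one must extract a \emph{finite} $C$ from the infinite $A$, and this is the standard compactness argument. If no finite $C\le A$ satisfied $C\to(\langle B\rangle)^{\langle F\rangle}_k$, then each finite $C\le A$ would carry a colouring of its $\langle F\rangle$-copies with no monochromatic copy of $\langle B\rangle$; such ``locally bad'' colourings are preserved under restriction to smaller finite substructures, and since the space $k^{\binom{A}{\langle F\rangle}}$ is compact --- invoke K\"onig's lemma when $A$ is countable, otherwise the finite-intersection-property formulation, noting that the set of colourings locally bad on a given finite substructure is clopen --- they amalgamate to a colouring of $\binom{A}{\langle F\rangle}$ with no monochromatic copy of $\langle B\rangle$ anywhere in $A$, contradicting the instance of $(ii)$ for $F\le B$. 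Hence some finite $C\le A$, which lies in $\age(A)$, arrows; as $F\le B$ and $k$ were arbitrary, $\age(A)$ is Ramsey. Everything outside this compactness step is bookkeeping around the identification $G/G_F\cong\binom{A}{\langle F\rangle}$ and the rigidity of finite linearly ordered structures.
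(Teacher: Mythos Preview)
Your argument is correct and follows the same route as the paper, which derives the corollary from Theorem~\ref{T2} via the brief remark that, under $\omega$-homogeneity and finiteness of finitely generated substructures, condition $(ii)$ of $(b)$ is exactly the Ramsey property for $\age(A)$. You have simply supplied the details the paper leaves implicit---the identification $G/G_F\cong\binom{A}{\langle F\rangle}$ via rigidity and the compactness step extracting a finite arrowing $C$---so there is nothing to correct.
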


Since the classes of finite structures in Example \ref{E1} are all relational and satisfy the Ramsey property, we get new examples of extremely amenable groups of uncountable structures (the result for countable structures was proved in \cite{KPT}).

\begin{cor}
Let $A$ be an uncountable graph ($K_n$-free graph, hypergraph, $\mathcal{A}$-free hypergraph, poset) and let $<$ be a linear ordering such that $(A,<)$ is $\omega$-homogeneous (and if $A$ is a poset, $<$ extends the partial order). Let $G$ be a dense subgroup of $\aut(A,<).$ If $\age(A,<)$ is the class of all finite  linearly ordered graphs ($K_n$-free graph, hypergraph, $\mathcal{A}$-free hypergraph, posets with the linear ordering extending the partial order), then $G$ is extremely amenable.
\end{cor}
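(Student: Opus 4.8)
The plan is to derive this corollary directly from the preceding corollary (the equivalence of extreme amenability with the Ramsey property for $\age(A)$) together with the list of Ramsey classes recorded in Example \ref{E1}. First I would note that a graph, a $K_n$-free graph, a hypergraph, an $\mathcal{A}$-free hypergraph, and a poset are all structures in a finite relational signature, and that adjoining a linear ordering keeps the signature finite and relational. Hence every finitely generated substructure of $(A,<)$ is just the structure induced on a finite subset, so it is finite; this means the hypothesis ``finitely generated substructures being finite'' of the preceding corollary holds automatically for $(A,<)$.

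Next I would apply the preceding corollary verbatim to the linearly ordered structure $(A,<)$: since $(A,<)$ is $\omega$-homogeneous (with $<$ extending the partial order in the poset case, as assumed) and $G$ is a dense subgroup of $\aut(A,<)$, that corollary gives the equivalence
$$
G \text{ is extremely amenable} \iff \age(A,<) \text{ satisfies the Ramsey property}.
$$
So it remains only to verify the right-hand side. By the hypothesis of the statement, $\age(A,<)$ is exactly one of the classes appearing in Example \ref{E1}: the class of all finite linearly ordered graphs, of all finite $K_n$-free graphs with arbitrary linear orderings, of all finite hypergraphs of the given signature with arbitrary linear orderings, of all finite $\mathcal{A}$-free hypergraphs of the given signature with arbitrary linear orderings, or of all finite posets with a linear ordering extending the partial order. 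Example \ref{E1} asserts that each of these classes has the Ramsey property (the graph and hypergraph cases via \cite{NR1} and \cite{NR2}, the poset case via \cite{MS}), and plugging this into the displayed equivalence yields that $G$ is extremely amenable.

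I do not anticipate any genuine obstacle: the statement is essentially a specialization of the preceding corollary, and the content has been outsourced to the structural Ramsey theorems cited in Example \ref{E1}. The only point requiring a moment's care is the bookkeeping step of matching $\age(A,<)$ with the correct entry of Example \ref{E1} in each of the five cases — but this identification is exactly what the hypothesis ``if $\age(A,<)$ is the class of all finite linearly ordered \dots'' supplies, so once the hypotheses are unwound the argument is immediate.
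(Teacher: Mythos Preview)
Your proposal is correct and matches the paper's own approach: the corollary is stated there without a separate proof, being presented as an immediate consequence of the preceding corollary together with Example~\ref{E1}. Your added remark that finitely generated substructures are finite (because the signatures are relational) makes explicit the one hypothesis check the paper leaves implicit.
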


We are to compute universal minimal flows of groups of automorphisms of $\omega$-homogeneous structures that are not linearly ordered, but admit a suitable linearly ordered $\omega$-homogeneous extension. In \cite{KPT}, it was shown that ``suitable'' can be expressed in terms of the ordering property. 

\begin{definition}[Ordering property]
Let $L\supset \{>\}$ be a signature and let $\K_{<}$ be a class of $L$-structures where $<$ is interpreted as a linear ordereding. Let $L_0=L\setminus \{<\}$ and $\K=\K_{<}|L_0.$ We say that $\K_<$ has the \emph{ordering property} if for every $A\in \K$ there exists $B\in \K$ such that for every linear ordering $\prec$ on $A$ and for every linear ordering $\prec'$ on $B,$ if $(A,\prec)\in\K_<$ and $(B,\prec')\in\K_<$ then $(A,\prec)\leq (B,\prec').$ 
\end{definition}

All classes of structures in Example \ref{E1} satisfying the Ramsey property also satisfy the ordering property.

The following three theorems are Theorems 7.1., 7.4. and 7.5.  from \cite{KPT}. Their proofs in \cite{KPT} are given for countable structures, but with slight modifications they work for uncountable structures as well, since the topology on an  automorphism group of arbitrary size is given by finitely generated  substructures. 

\begin{theorem}\label{Torder}
Let $A$ be a structure, $\prec$ a linear ordering on $A$ and let $G$ be a dense subgroup of $\aut(A).$ Then $<\in \overline{G\prec}$ if and only if for every $B\in\age(A),$ $(B,<|B)\in\age(A,\prec).$ 
\end{theorem}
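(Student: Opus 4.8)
The plan is to read everything off the definitions of the topology on $\lo(A)$ and of the $S_{\k}$-action, invoking $\omega$-homogeneity of $A$ for the nontrivial direction. First I would reduce to the case $G=\aut(A)$: since the action $\aut(A)\times\lo(A)\to\lo(A)$ is continuous and $G$ is dense in $\aut(A)$, every $h\in\aut(A)$ is a limit of a net from $G$, so $h\prec\in\overline{G\prec}$; hence $\overline{G\prec}=\overline{\aut(A)\prec}$ and it is enough to prove the equivalence with $\aut(A)$ in place of $G$. Recall that for finite $b\subseteq A$ the set $N_b$ of all linear orderings agreeing with $<$ on $b\times b$ is a basic open neighbourhood of $<$, that these form a neighbourhood basis at $<$, and that by definition $\alpha\,(h\prec)\,\beta$ iff $h^{-1}(\alpha)\prec h^{-1}(\beta)$. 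I also use that the members of $\age(A)$ are finite, as they are in every case treated here.

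For $(\Rightarrow)$, suppose $<\in\overline{\aut(A)\prec}$ and take $B\in\age(A)$ with underlying set $b$. Since $N_b$ is a neighbourhood of $<$ it meets $\aut(A)\prec$, so there is $h\in\aut(A)$ with $h\prec\in N_b$; unwinding the action, $\alpha<\beta\Leftrightarrow h^{-1}(\alpha)\prec h^{-1}(\beta)$ for all $\alpha,\beta\in b$. As $h^{-1}$ is an automorphism of $A$, it maps $B$ isomorphically onto the substructure $h^{-1}[B]\le A$, and the displayed equivalence says precisely that this isomorphism is also order preserving from $(B,<|B)$ onto $(h^{-1}[B],\prec|h^{-1}[B])\le(A,\prec)$. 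Hence $(B,<|B)\in\age(A,\prec)$.

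For $(\Leftarrow)$, suppose $(B,<|B)\in\age(A,\prec)$ for every $B\in\age(A)$, and fix a finite $b\subseteq A$; it suffices to find $h\in\aut(A)$ with $h\prec\in N_b$. Let $B\le A$ be generated by $b$. By hypothesis there is an embedding $f\colon(B,<|B)\to(A,\prec)$; forgetting the orderings, $f$ is a partial isomorphism between the finitely generated substructures $B$ and $f[B]$ of $A$, so by $\omega$-homogeneity it extends to an automorphism $h\in\aut(A)$ with $h(\alpha)=f(\alpha)$ for $\alpha\in B$. Then for $\alpha,\beta\in b\subseteq B$ we get $\alpha\,(h^{-1}\prec)\,\beta\Leftrightarrow h(\alpha)\prec h(\beta)\Leftrightarrow f(\alpha)\prec f(\beta)\Leftrightarrow\alpha<\beta$, the last step since $f$ is an order embedding. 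So $h^{-1}\prec$ agrees with $<$ on $b\times b$, i.e. $h^{-1}\prec\in N_b$, and we are done.

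The proof involves no serious obstacle; the two points needing care are keeping the $S_{\k}$-action convention straight (it is $h^{-1}$, not $h$, that one evaluates to test whether $h\prec$ agrees with $<$ on a finite set) and, in $(\Leftarrow)$, recognizing that the abstract ordered embedding $(B,<|B)\to(A,\prec)$ is exactly the input $\omega$-homogeneity consumes, the order clause being what forces $h^{-1}\prec\in N_b$. The one place where the stated generality would bite is $(\Rightarrow)$ when $\age(A)$ contains infinite members: there one would need an amalgamation/coherence step to patch the finite approximations into a single embedding of $B$ into $(A,\prec)$. This does not arise for the (relational) structures of interest in this paper.
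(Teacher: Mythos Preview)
Your proof is correct and follows essentially the same route as the paper's: both directions unwind the definition of the orbit closure via basic open neighbourhoods determined by finite sets, and the backward direction invokes $\omega$-homogeneity of $A$ to promote the ordered embedding to an automorphism. Your treatment is in fact slightly more careful than the paper's in two respects---you explicitly reduce to $G=\aut(A)$ via density before applying $\omega$-homogeneity (the paper simply writes ``extended to a $g\in G$'' where strictly only $g\in\aut(A)$ is guaranteed), and you correctly flag the finiteness assumption on members of $\age(A)$, which the paper's proof also uses tacitly.
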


\begin{proof}
Let $<\in \overline{G\prec}.$ It means that for every $B\in\age(A)$ there exists $g\in G$ such that $g\prec|B=<|B.$ Thus $g:g^{-1}(B)\to B$ is an isomorphism between $(g^{-1}(B),\prec|g^{-1}(B))$ and $(B,<|B),$ which shows that $(B,<|B)\in\age(A,\prec).$

Conversely, suppose that $<\in\lo(A)$ and $(B,<|B)\in\age(A,\prec)$ for every $B\in\age(A).$ This means that for every $B\in\age(A)$ there is an embedding $i:(B,<|B)\to (A,\prec).$ If $C=i(B),$ then $i^{-1}$ is an isomorphism between $(C,\prec|C)$ and $(B,<|B)$ and in particular $i^{-1}$ is an isomorphism between $C$ and $B.$ By $\omega$-homogeneity of $A,$ $i^{-1}$ can be extended to a $g\in G.$ Then $<|B=g\prec |B$ and thus  $<\in \overline{G\prec}.$
\end{proof}

\begin{theorem}
Let $A$ be an $\omega$-homogeneous structure and let $\prec$ be a linear ordering on $A$ such that $(A,\prec)$ is $\omega$-homogeneous as well. Let $G$ be a dense subgroup of $\aut(A).$  Then the following are equivalent:
\begin{itemize}
\item[$(a)$] $\age(A,\prec)$ satisfies the ordering property,
\item[$(b)$] $\overline{G\prec}$ is a minimal $G$-flow.
\end{itemize} 
\end{theorem}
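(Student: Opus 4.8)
The plan is to reduce both implications to Theorem~\ref{Torder}, which identifies $\overline{G\prec}$ with the set of all $<\in\lo(A)$ such that $(B,<|B)\in\age(A,\prec)$ for every finitely generated $B\leq A$. Together with $\omega$-homogeneity of $A$ and density of $G$ in $\aut(A)$ this turns minimality into a combinatorial statement: $\overline{G\prec}$ is minimal precisely when for all $<_0,<_1\in\overline{G\prec}$ and every finite $F\leq A$ there is $g\in G$ with $g<_0|F=<_1|F$; and such a $g$ exists if and only if $(F,<_1|F)$ embeds, as an $L$-structure, into $(A,<_0)$, since an embedding of its $L_0$-reduct extends to an automorphism of $A$ by $\omega$-homogeneity and its inverse can be chosen inside $G$ by density. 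Since the members of $\age(A,\prec)$ are, up to isomorphism, exactly the $(F,<_1|F)$ with $<_1\in\overline{G\prec}$, and the $<_0\in\overline{G\prec}$ are exactly the orderings with $\age(A,<_0)\subseteq\age(A,\prec)$, it suffices to show: the ordering property for $\age(A,\prec)$ holds if and only if every member of $\age(A,\prec)$ embeds into every $(A,<_0)$ satisfying $\age(A,<_0)\subseteq\age(A,\prec)$.

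For $(a)\Rightarrow(b)$ I fix $<_0,<_1\in\overline{G\prec}$ and finite $F\leq A$, and let $F_0$ be the $L_0$-reduct of $F$. Applying the ordering property at $F_0$ produces $E\in\age(A)$; choosing a copy $E'\leq A$ of $E$, Theorem~\ref{Torder} gives $(E',<_0|E')\in\age(A,\prec)$, so pulling this order back along $E'\cong E$ yields an ordering $\prec_2$ of $E$ with $(E,\prec_2)\in\age(A,\prec)$. Likewise $<_1\in\overline{G\prec}$ gives $(F,<_1|F)\in\age(A,\prec)$, i.e.\ an admissible ordering of $F_0$. The ordering property then delivers an embedding $(F,<_1|F)\hookrightarrow(E,\prec_2)\cong(E',<_0|E')\leq(A,<_0)$, which is what the reformulation requires.

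For $(b)\Rightarrow(a)$ I argue the contrapositive: assuming the ordering property fails at some $P\in\age(A)$, I produce $<\in\overline{G\prec}$ with $\overline{G<}\subsetneq\overline{G\prec}$. First I isolate a single admissible ordering $\prec^{*}$ of $P$ (one with $(P,\prec^{*})\in\age(A,\prec)$) that is avoidable everywhere, i.e.\ such that every $Q\in\age(A)$ carries an admissible ordering into which $(P,\prec^{*})$ does not embed. If no such $\prec^{*}$ existed, then for each of the finitely many admissible orderings $\prec^{i}$ of $P$ there would be $Q_i\in\age(A)$ all of whose admissible orderings contain $(P,\prec^{i})$; a common superstructure $Q^{*}\in\age(A)$ of the $Q_i$ — available since finitely generated substructures are finite — would then have all of its admissible orderings containing every $(P,\prec^{i})$, witnessing the ordering property at $P$, a contradiction. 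With $\prec^{*}$ fixed, I run a compactness argument in $\lo(A)$: the sets $\{<\in\lo(A):(B,<|B)\in\age(A,\prec)\text{ and }(P,\prec^{*})\not\hookrightarrow(B,<|B)\}$ are clopen, decrease as $B$ grows, and each is nonempty (extend an admissible avoiding ordering of a sufficiently large finite substructure, supplied by the previous step, arbitrarily to all of $A$), so the finite-intersection property yields a linear ordering $<$ on $A$ lying in all of them. By Theorem~\ref{Torder} this $<$ belongs to $\overline{G\prec}$, while $\prec\notin\overline{G<}$, again by Theorem~\ref{Torder}, because $(P,\prec^{*})\in\age(A,\prec)\setminus\age(A,<)$; hence $\overline{G<}$ is a proper nonempty closed $G$-invariant subset of $\overline{G\prec}$, so $\overline{G\prec}$ is not minimal.

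The manipulations with Theorem~\ref{Torder}, $\omega$-homogeneity, density of $G$, and hereditariness of $\age(A,\prec)$ are routine; the substance lies in $(b)\Rightarrow(a)$, specifically in the finitary extraction of the single avoidable ordered structure $(P,\prec^{*})$ (a pigeonhole argument over the orderings of $P$ together with joint embedding inside $\age(A)$) and in the compactness construction of the witnessing ordering. I also record the standing point, which makes both of these go through, that for all structures considered here finitely generated substructures are finite, so $\age(A)$ and $\age(A,\prec)$ are classes of finite structures, $\age(A)$ has the joint embedding property, and the ordering property applies as stated.
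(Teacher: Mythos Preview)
Your argument is correct. For $(a)\Rightarrow(b)$ you do essentially what the paper does: both proofs use Theorem~\ref{Torder} to translate membership in orbit closures into an inclusion of ages, and then invoke the ordering property to produce the required embedding. The only cosmetic difference is that the paper checks $\prec\in\overline{G<}$ for each $<\in\overline{G\prec}$ (which already forces $\overline{G<}=\overline{G\prec}$), whereas you verify the a priori stronger statement $<_1\in\overline{G<_0}$ for arbitrary $<_0,<_1\in\overline{G\prec}$.

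For $(b)\Rightarrow(a)$ the routes genuinely diverge. The paper argues \emph{directly}: fixing $B$ and an admissible ordering $\prec'$ on it, the sets $X_C=\{<\in\overline{G\prec}:(B,\prec')\cong(C,<|C)\}$ form an open cover of $\overline{G\prec}$ (this uses minimality via Theorem~\ref{Torder}), and compactness of $\overline{G\prec}$ yields finitely many $C_1,\ldots,C_n$; the substructure they generate serves as $D_{\prec'}$, and one finishes by taking the union over the finitely many admissible orderings of $B$. Your approach is \emph{contrapositive}: from failure of the ordering property at $P$ you extract, by pigeonhole over the finitely many admissible orderings of $P$ together with joint embedding in $\age(A)$, a single ordered type $(P,\prec^{*})$ that is avoidable in every $Q\in\age(A)$; then a compactness argument in $\lo(A)$ manufactures a $<\in\overline{G\prec}$ omitting $(P,\prec^{*})$, witnessing non-minimality. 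The paper's argument is a bit shorter and uses compactness of the flow itself, while yours makes explicit the single ordered configuration that obstructs minimality and builds the witnessing point of $\overline{G\prec}$ by hand. Both rely in the same way on the standing hypothesis that finitely generated substructures are finite (so that $\age(A)$ consists of finite structures, the relevant subsets of $\lo(A)$ are clopen, and JEP inside $\age(A)$ is available); you are right to flag this.
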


\begin{proof}
$(a) \Rightarrow (b)$ Let $<\in\overline{G\prec}.$ We would like to show that also $\prec\in\overline{G <}.$ By the previous theorem, this is equivalent to showing that $\age(A,\prec)\subset \age(A,<).$



To that end, let $(B,\prec|B)\in\age(A,\prec)$ and find $D\in\age(A)$ given by the ordering property for $B.$ Let $D_{0}$ be an isomorphic copy of $D$ in $A$ and let $i:(B,<|B)\to (D_0,\prec|D_0)$ be an embedding ensured by the ordering property. Then $i$ is an isomorphism between $(B,<|B)$ and $(i(B),\prec|i(B))\in\age(A,\prec),$ showing $(B,<|B)\in\age(A,\prec).$

$(b)\Rightarrow (a)$
Given $B\in\age(A)$ we need to find $D\in\age(A)$ such that whenever $(B,\prec')\in\age(A,\prec)$ and  $(D,<)\in\age(A,\prec),$ there is an embedding of $(B,\prec')$ into $(D,<).$ Fix $(B,\prec')\in\age(A,\prec).$ For every $C\in\age(A)$ consider the set
$$
X_C=\{<\in\overline{G\prec} : (B,\prec'|B) \cong (C,<|C)\}.
$$ 
Then $\overline{G\prec}=\bigcup_{C\in\age(A)}X_C.$ Since each $X_C$ is open, there are $C_1,C_2,\ldots, C_n\in\age(A)$ such that $\overline{G\prec}=\bigcup_{i=1}^n X_{C_i}$ by compactness of $\overline{G\prec}.$ Let $D_{\prec'}$ be the substructure of $A$ generated by $\bigcup_{i=1}^n C_i.$ We need to show that whenever $(D_{\prec'},<)\in\age(A,\prec),$ then $(B,\prec')\leq (D_{\prec'},<).$ It means we need to find $<'\in\overline{G\prec}$ extending $<.$ By minimality of $\overline{G\prec},$ we know that there exists an embedding $i:(D_{\prec'},<) \to (A,\prec).$ By $\omega$-homogeneity of $A,$ there is $g\in G$ extending $i$ to all of $A.$ Then we get that $g^{-1} \prec|D_{\prec'}=<,$ so $g^{-1}\prec\in G\prec$ is the sought for extension of $<.$ 

Now we repeat the procedure for every linear ordering $\prec'$ on $B$ with $(B,\prec')\in\age(A,\prec)$ and set $D$ to be the substructure of $A$ generated by 
$$\bigcup_{(B,\prec')\in \age(A,\prec)} D_{\prec'}.$$
Then $D$ is a witness of the ordering property for $B$ and we are done.  
 
\end{proof}

\begin{theorem}\label{TUM}
Let $A$ be an $\omega$-ultrahomogeneous structures and let $\prec$ be a linear ordering on $A$ such that $(A,\prec)$ is also $\omega$-homogeneous. Suppose that finitely generated substructures of $A$ are finite. Let $H=\aut(A)$ and $H_{\prec}=\aut(A,\prec).$ 
\begin{itemize}
\item[$(a)$] Suppose that $\age(A,\prec)$ satisfies the Ramsey property. Then $(\overline{H\prec},\prec)$ is the universal ambit among those $H$-ambits whose base point is fixed by the action of $H_{\prec}.$
\item[$(b)$] Suppose that $\age(A,\prec)$ satisfies both the Ramsey and the ordering properties. Then $\overline{H\prec}$ is the universal minimal $H$-flow.
\end{itemize}
\end{theorem}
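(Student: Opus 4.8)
The plan is to establish $(a)$ directly and then deduce $(b)$ formally from it, the preceding theorem (characterising minimality of $\overline{G\prec}$ by the ordering property), and extreme amenability of $H_\prec$. For $(a)$, one direction is immediate: $\prec\in\overline{H\prec}$ and $H\prec$ is dense in it, so $(\overline{H\prec},\prec)$ is an $H$-ambit, and since $H_\prec=\aut(A,\prec)$ is exactly the stabiliser of $\prec$ inside $H=\aut(A)$, it fixes this base point. For universality, I would take an arbitrary $H$-ambit $(X,x_0)$ with $ux_0=x_0$ for every $u\in H_\prec$ and define a map on the orbit by $\phi(g\prec)=gx_0$. This is well defined: if $g\prec=h\prec$ then $(h^{-1}g)\prec=\prec$, hence $h^{-1}g\in\aut(A,\prec)=H_\prec$, hence $gx_0=h((h^{-1}g)x_0)=hx_0$. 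By construction $\phi$ is $H$-equivariant and sends $\prec$ to $x_0$; the required $H$-morphism $(\overline{H\prec},\prec)\to(X,x_0)$ will be its continuous extension to $\overline{H\prec}$. Uniqueness of that extension is automatic since $H\prec$ is dense and $X$ Hausdorff, and the extension is still equivariant because equivariance is a closed condition on $\overline{H\prec}\times X$.

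The whole of $(a)$ thus reduces to extending $\phi$ continuously, and since $\overline{H\prec}$ is compact Hausdorff with $H\prec$ dense, this is the same as checking that $\phi$ is uniformly continuous from $H\prec$ — with the uniformity it inherits from $2^{\kappa\times\kappa}$, a base for which consists of the sets $\{(\prec_1,\prec_2):\prec_1\text{ and }\prec_2\text{ agree on }S\times S\}$ for finite $S\subseteq\kappa$ — to $X$ with its unique compatible uniformity. This is the step where the Ramsey property of $\age(A,\prec)$ is used, and I expect it to be the only genuine obstacle. Given an entourage of $X$, one covers $X$ by finitely many open sets $O_1,\dots,O_n$ subordinate to it; then $c(g\prec)=\min\{i:gx_0\in O_i\}$ is a well-defined finite colouring of the orbit $H\prec$ (again by the fixed-point hypothesis), which by Theorem~\ref{Torder} may be read as a colouring of the finite ordered substructures realised in $(A,\prec)$. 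One first uses continuity of the action to produce a finite substructure whose pointwise stabiliser moves $x_0$ only within a single piece of the cover, and then uses the Ramsey property to absorb the colouring $c$ into a single large enough finite $S\subseteq\kappa$, so that agreement of $g\prec$ and $h\prec$ on $S\times S$ forces $gx_0$ and $hx_0$ into a common $O_i$. This reproduces the argument of \cite{KPT}; the only point requiring attention in the present generality is that the topology of $H$ is still generated by pointwise stabilisers of finite sets, which holds because finitely generated substructures of $A$ are finite, so the colouring and compactness steps go through unchanged.

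Granting $(a)$, part $(b)$ needs no further combinatorics. Since $\age(A,\prec)$ has the Ramsey property, the corollary to Theorem~\ref{T2} characterising extreme amenability by the Ramsey property, applied to the $\omega$-homogeneous linearly ordered structure $(A,\prec)$ with $G=H_\prec=\aut(A,\prec)$, shows that $H_\prec$ is extremely amenable; in particular every compact $H_\prec$-flow has a fixed point. Now let $Y$ be an arbitrary minimal $H$-flow. Restricting the action to $H_\prec$ and using extreme amenability, pick $y_0\in Y$ with $H_\prec y_0=y_0$; minimality of $Y$ makes $Hy_0$ dense, so $(Y,y_0)$ is an $H$-ambit with an $H_\prec$-fixed base point, and $(a)$ yields an $H$-morphism $\overline{H\prec}\to Y$, necessarily onto because $Y$ is minimal. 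Hence every minimal $H$-flow is a quotient of $\overline{H\prec}$; and by the preceding theorem, the ordering property of $\age(A,\prec)$ makes $\overline{H\prec}$ itself minimal. A minimal flow of which every minimal flow is a quotient is, by the standard uniqueness of the universal minimal flow, the universal minimal flow, so $\overline{H\prec}$ is the universal minimal $H$-flow.
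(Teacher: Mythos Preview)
Your overall architecture is sound and your derivation of (b) from (a), the preceding minimality theorem, and extreme amenability of $H_\prec$ is essentially the paper's own argument. For (a), defining $\phi(g\prec)=gx_0$ on the orbit, checking it is well defined via $H_\prec=\mathrm{Stab}_H(\prec)$, and then extending to $\overline{H\prec}$ is also what the paper does; the paper packages the extension step as showing that the closure of $\{(h\prec,hx_0):h\in H\}$ in $\overline{H\prec}\times X$ is the graph of a function, which is equivalent to your uniform-continuity formulation.

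The gap is in how you justify the extension. You propose to prove uniform continuity via the Ramsey property, but the sketch is not a proof: your colouring $c$ lives on the orbit $H\prec\cong H/H_\prec$, not on copies of a fixed finite substructure inside a larger one, and Theorem~\ref{Torder} does not convert one into the other; the phrase ``absorb the colouring into a single large enough finite $S$'' has no clear meaning here. More importantly, you never use the $\omega$-homogeneity of $(A,\prec)$, and that hypothesis---not Ramsey---is the actual engine of part (a). The paper's argument runs as follows: continuity of the action gives a finite $B\subseteq A$ whose pointwise stabiliser $H_B$ lies inside a prescribed identity neighbourhood $O_H$; if now $g\prec|B=h\prec|B$, then $g^{-1}(b)\mapsto h^{-1}(b)$ is an isomorphism between $(g^{-1}B,\prec)$ and $(h^{-1}B,\prec)$, which by $\omega$-homogeneity of $(A,\prec)$ extends to some $r\in H_\prec$; then $hrg^{-1}\in H_B\subseteq O_H$, and since $rx_0=x_0$ one obtains $(gx_0,hx_0)$ in the given entourage. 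That single step replaces your Ramsey/colouring paragraph entirely; in the paper the Ramsey hypothesis stated in (a) is in fact not used there and first does work in (b), where it yields extreme amenability of $H_\prec$.
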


\begin{proof}
$(a)$ Let $(X,x_0)$ be an $H$-flow such that $H_{\prec}x_0=\{x_0\}.$ Let $\Phi$ be the closure of the following set in the compact Hausdorff space $\overline{H\prec}\times X.$
$$
\{(h\prec,hx_0):h\in H\}\subset \overline{H\prec}\times X.
$$
We will show that $\Phi$ is a graph of a function $\phi:\overline{H\prec} \to X.$ Having proved this, it is easy to verify that $\phi$ works:  Since $\Phi$ is closed, $\phi$ is continuous. Also, $\phi$ is an $H$-homomorphism: If $(<,x)\in\Phi,$ then there is a net $\{h_i:i\in I\}$ such that $\{h_i\prec:i\in I\}$ converges to $<$ and $\{h_i x_0:h_i\in I\}$ converges to $x.$ Let $h\in H.$ Then $(hh_i\prec,hh_ix_0)\in\Phi$ for every $i\in I,$ and $\{(hh_i,hh_ix_0):i\in I\}$ converges to $(h<,hx)\in\Phi.$ It follows that $\phi(h<)=hx=h\phi(\prec).$ Finally, since $Hx_0$ is dense in $X,$ $\phi$ is surjective.

First, let us show that for every $<\in \overline{H\prec}$ there is $x\in X$ such that $(<,x)\in\Phi.$ Indeed, let $\{h_i:i\in I\}$ be a net such that $\{h_i \prec\}$ converges to $<.$ Then $\{h_i x_0\}$ is a net in $X,$ so by compactness of $X$ there is a subnet $h_i^j x_0$ converging to some $x\in X.$ Then $(h_i^j\prec,h_i^j x_0)$ converges to $(<,x)\in \Phi.$

Second, we prove that if $(<,x_1),(<,x_2)\in \Phi,$ then $x_1=x_2.$ Let $\{h_i:i\in I\}$ and $\{g_j:j\in J\}$ be nets such that $(h_i\prec,h_ix_0)$ is a net converging to $(<,x_1)$ and $(g_j\prec, g_jx_0)$ is a net converging to $(<,x_2)$ and suppose that $x_1\neq x_2.$ 

As $X$ is compact Hausdorff, it is regular, so there are open neighbourhoods $U_1, U_2$ of $x_1,x_2$ respectively and $V$ of the diagonal $\Delta=\{(x,x):x\in X\}$ such that $V\cap(U_1\times U_2)=\emptyset.$ Without loss of generality we may assume that $h_ix_0\in U_1$ for every $i\in I$ and $g_jx_0\in U_2$ for every $j\in J.$ For every $y\in X,$ there is a neighbourhood $U_y$ of $y$ such that $U_y\times U_y\subset V.$ Again by regularity, there are open $V_y$ for $y\in X$ such that $\overline{V_y}\subset U_y.$ By compactness, we can find $y_1,y_2,\ldots,y_n\in X$ such that $X=\bigcup_{i=1}^n \overline{V_{y_i}}.$

Let $\homeo(X)$ denote the group of homeomorphism of $X$ with the compact open topology. Since the action of $H$ on $X$ is continuous, the map $\phi: H\to \homeo(X)$ defined by $\phi(h)(x)=hx,$ is continuous. Since $\overline{V_{y_i}}\subset U_{y_i}$ for every $i,$ the set
$$
O=\bigcap_{i=1}^n \{f\in \homeo(X):f(\overline{V_{y_i}})\subset U_{y_i}\}
$$
is an open neighbourhoood of the indentity in $\homeo(X).$ Let $O_H=\phi^{-1}(O).$ Then $O_H$ is an open neighbourhood of the identity element in $H$ and whenever $h\in O_H,$ then $h\overline{V_{y_i}}\subset U_{y_i}$ for $i=1,2,\ldots,n,$ so $(y,hy)\in V$ for every $y\in X.$ Since the topology on $H$ is determined by finite substructures of $A,$ there exists $B\in\age(A)$ such that whenever $h_1,h_2\in H$ and $h_1|B=h_2|B$ then $h_1^{-1}h_2\in O_H.$ Since $\{(h_ix_0,h_i\prec):i\in I\}$ converges to $(x_1,<)$ and $\{(g_jx_0,g_j\prec):j\in J\}$ converges to $(x_2,<),$ there are $i_0\in I$ and $j_0\in J$ such that $h_{i_0}x_0\in U_1,$ $g_{j_0}x_0\in U_2$ and 
$$
h_{i_0}\prec|B=g_{j_0}\prec|B=<|B.
$$ 
That is to say, for every $b_1,b_2\in B,$
$$
h_{i_0}^{-1}(b_1)\prec h_{i_0}^{-1}(b_2) \leftrightarrow g_{j_0}^{-1}(b_1)\prec g_{j_0}^{-1}(b_2).
$$
If we denote $h^{-1}_{i_0}B=C$ and $g^{-1}_{j_0}B=D,$ then $(C,\prec|C)$ and $(D,\prec|D)$ are isomorphic via $\rho:h^{-1}_{i_0}(b)\mapsto g^{-1}_{j_0}(b)$ for $b\in B.$ Since $(A,\prec)$ is $\omega$-homogeneous, there exists $r\in H_{\prec}$ extending $\rho$ to all $(A,\prec).$ It means that $rh_{i_0}^{-1}(b)=g_{j_0}^{-1}(b)$ for every $b\in B,$ in other words $rh^{-1}_{i_0}|B=g^{-1}_{j_0}|B.$ By the choice of $B,$ $g_{j_0}r h^{-1}_{i_0}\in O_H.$ So $(h_{i_0}x_0,g_{j_0}rh^{-1}_{i_0}(h_{i_0}x_0))\in V.$ But $r\in H_{\prec},$ so $rx_0=x_0$ and therefore $(h_{i_0}x_0, g_{j_0}x_0)\in V.$ But also $(h_{i_0}x_0, g_{j_0}x_0)\in U_1\times U_2$,  which is a contradiction. 

$(b)$  Since $H_{\prec}$ is extremely amenable, every $H$-flow has a fixed point under the restricted action by $H_{\prec}.$ In particular, every minimal flow has a fixed point. Every point of a minimal flow has a dense orbit, so $\overline{H\prec}$ is universal among all minimal $H$-flows by part $(a)$. Since $\age(A,\prec)$ satisfies the ordering property, $\overline{H\prec}$ is a minimal $H$-flow by the previous theorem. Altogether we get that $\overline{H\prec}$ is the universal minimal flow of $H.$
\end{proof}

\begin{cor}
Let $A,\prec$ and $H$ be as above and let $G$ be a dense subgroup of $H.$ Then the universal minimal flow of $G$ is  $\overline{G\prec}.$
\end{cor}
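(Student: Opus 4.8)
The plan is to deduce the corollary directly from Theorem~\ref{TUM}(b) together with the earlier remark that a dense subgroup of an automorphism group has the same universal minimal flow as the full group. More precisely, the fact quoted in the excerpt from~\cite{DB} says that whenever $G$ is dense in $\aut(A)$ for a structure $A$, then the universal minimal flow of $G$ coincides with that of $\aut(A)$ under the restricted action; so it suffices to identify $\overline{G\prec}$ with $\overline{H\prec}$ as $G$-flows, since Theorem~\ref{TUM}(b) already gives $\overline{H\prec}$ as the universal minimal flow of $H=\aut(A)$.

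First I would observe that $G\leq H$ and $G$ is dense in $H$, so by continuity of the action of $H$ on $\lo(A)$ the orbit $G\prec$ is dense in $H\prec$, hence $\overline{G\prec}=\overline{H\prec}$ as subsets of $\lo(A)$; the $G$-action on $\overline{H\prec}$ is simply the restriction of the $H$-action. Next I would note that since $H_{\prec}=\aut(A,\prec)$ is extremely amenable (by the Corollary following Theorem~\ref{T2}, as $\age(A,\prec)$ has the Ramsey property), every $H$-flow — in particular $\overline{H\prec}$ — has a point fixed by $H_{\prec}$, and $(\overline{H\prec},\prec)$ is the universal such ambit by Theorem~\ref{TUM}(a); combined with the ordering property and the minimality theorem, $\overline{H\prec}$ is the universal minimal flow of $H$. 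The combinatorial content — Ramsey plus ordering property — is thus entirely absorbed into the already-proven Theorem~\ref{TUM}.

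It then remains only to transfer universality and minimality from $H$ to its dense subgroup $G$. Minimality is immediate: a flow is minimal for $H$ iff it is minimal for $G$, because density of $G$ in $H$ means that for each $x$ the $G$-orbit closure and the $H$-orbit closure agree (a net in $H$ witnessing $hx \to$ something can be approximated by elements of $G$ using continuity of the action and the clopen-stabilizer basis of the topology). For universality I would invoke the cited result of~\cite{DB} directly rather than reprove it: given any minimal $G$-flow $Y$, it extends (as the same space with an action of $H$ factoring through the fact that $\aut(A)$ and $G$ have the same completion-induced dynamics, or more simply by the \cite{DB} statement) to a minimal $H$-flow, which receives a $G$-homomorphism from $\overline{H\prec}=\overline{G\prec}$, and this homomorphism is a $G$-map. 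Hence $\overline{G\prec}$ maps onto every minimal $G$-flow, so it is the universal minimal flow of $G$.

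The main obstacle, such as it is, is bookkeeping rather than mathematics: one must be careful that ``$G$ dense in $\aut(A)$'' is exactly the hypothesis under which the transfer principle from~\cite{DB} applies, and that the ordering $\prec$ and the requirement that $(A,\prec)$ be $\omega$-homogeneous are inherited correctly so that Theorem~\ref{TUM} is genuinely applicable to $A$. Since the statement says ``$A,\prec$ and $H$ be as above,'' all these hypotheses are already in force, and the corollary is essentially a one-line consequence: $\overline{G\prec}=\overline{H\prec}$ is the universal minimal flow of $H$ by Theorem~\ref{TUM}(b), hence of $G$ by the density transfer principle of~\cite{DB}.
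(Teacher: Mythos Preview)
Your proposal is correct and matches the paper's intended argument: the corollary is stated without proof in the paper, and the reasoning it relies on is precisely what you give --- $\overline{G\prec}=\overline{H\prec}$ by density of $G$ in $H$ and continuity of the action, Theorem~\ref{TUM}(b) identifies this as the universal minimal flow of $H$, and the earlier-quoted fact from \cite{DB} transfers it to $G$. Your discussion of why minimality passes to the dense subgroup is accurate but not strictly needed, since the \cite{DB} statement already packages both universality and minimality.
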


\section{J\'{o}nsson structures}

In \cite{J1} and \cite{J2}, J\'onsson generalized Fra\"iss\'e's contruction (\cite{F}) of countable universal homogenous structures to uncountable structures. In general, the construction only works for relational structures of cardinality $\kappa$ with  $\kappa^{<\kappa}=\kappa.$ For instance, it is consistent both that there exists and that there does not exist a universal graph of cardinality $\aleph_1$ (see \cite{S2}). In the first article, \cite{J1}, J\'onsson was looking for conditions on a class $\K$ of relational structures that would give rise to a universal structure for $\K$. In the second article, \cite{J1}, he used the amalgamation property by Fra\"iss\'e to answer in positive a question of R. Baer whether the universal structure would be unique if an additional condition was imposed on the class $\K.$  Below, we recall J\'onsson's conditions (the third and fourth are the joint embedding property and the amalgamation property introduced by Fra\"iss\'e, the first and fourth are variations of the original conditions from \cite{J1} ensuring homogeneity in \cite{J2}, the last condition is a weakening of Fra\"iss\'e's condition that the class is hereditary).
\begin{itemize}
\item[I'.] For each ordinal $\xi$ there is $A\in\K$ of cardinality greater or equal to $\aleph_{\xi}.$
\item[II.] If $A\in \K$ and $A$ is isomorphic to a structure $B$ then $B\in\K.$
\item[III.] For every $A,B\in\K$ there exist $C\in\K$ and embeddings $f:A\to C$ and $g:B\to C.$
\item[IV'.] For every $A,B,C\in\K$ and embeddings $f:A\to B$ and $g:A\to C$ there exist $D\in\K$ and embeddings $i:B\to D$ and $j:C\to D$ such that $i\circ f=j\circ g.$ 
\item[V.] If $\l$ is a positive ordinal and  $\left< A_{\xi}:\xi\in\l \right>$ is a sequence of structures in $\K$ such that $A_{\xi}\leq A_{\nu}$ whenever $\xi<\nu<\l,$ then $\bigcup_{\xi<\l}A_{\xi}\in \K.$
\item[VI$_{\alpha}$.] If $A\in\K,$ $B\leq A$ and the cardinality of $B$ is less than $\aleph_{\alpha},$ then there exists $C\in\K$ such that $B\leq C\leq A$ and the cardinality  of $C$ is less than $\aleph_{\alpha}.$
\end{itemize}

We will call a class of relational structures satifying the conditions  I'., II, III, VI'., V. and VI$_{\alpha}.,$ for some positive ordinal $\alpha$ a \emph{J\'onsson class} (for $\alpha$).

Let us make precise what we mean by a universal and a homogeneous structure.

\begin{definition}[Universality; \cite{J1}]
Let $\K$ be a class of relational structures and let $\alpha$ be an ordinal. We say that a structure $A\in\K$ is \emph{$(\aleph_{\alpha},\K)$-univeral} if $A$ has cardinality $\aleph_{\alpha}$ and every $B\in \K$ of cardinality $\leq\aleph_{\alpha}$ can be embedded into $A.$
\end{definition}

\begin{definition}[Homogeneity; \cite{J2}]
Let $\K$ be a class of relational structures and let $\alpha$ be an ordinal. We say that a structure $A$ is \emph{($\aleph_{\alpha}, \K$)-homogeneous} if $A\in\K,$ the cardinality of $A$ is $\aleph_{\alpha}$ and for any substructure $B\in \K$ of $A$ of a  smaller cardinality, every embedding of $B$ into $A$ can be extended to an automorphism of $A.$
\end{definition}

The following proposition gives us a tool to check homogeneity of structures. The proof goes by a back-and-forth argument.

\begin{prop}[Extension property] \label{ep}
Let $\K$ be a class of structures and $A$ a structure of cardinality $\aleph_{\alpha}$ for some ordinal $\alpha.$ Then $A$ is $(\aleph_{\alpha}, \K)$-homogeneous if and only if $A\in\K$ and for every $B,C\in\K$ of cardinalities less than $\aleph_{\alpha}$ and embeddings $i:B\to A$ and $j:B\to C,$ there exists and embedding $k:C\to A$ such that $i=k\circ j.$
\end{prop}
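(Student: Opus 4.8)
The plan is to prove both directions by the standard back-and-forth argument, using the hypothesis $\kappa^{<\kappa}=\kappa$ only implicitly through the fact that $A$ has a well-ordered enumeration of length $\aleph_\alpha$ and that every structure of cardinality $<\aleph_\alpha$ appears as a substructure. For the forward direction, suppose $A$ is $(\aleph_\alpha,\K)$-homogeneous, and let $B,C\in\K$ have cardinality $<\aleph_\alpha$ with embeddings $i\colon B\to A$ and $j\colon B\to C$. First I would identify $B$ with its image $i[B]\leq A$, so that $j$ becomes an embedding of a substructure $B'=i[B]\in\K$ of $A$ into $C$. The task is then to find an embedding $k\colon C\to A$ agreeing with the inclusion on $B'$. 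One cannot apply homogeneity of $A$ directly since $C$ need not be a substructure of $A$, so the step here is to build a copy of $C$ inside $A$ extending $B'$: enumerate $C\setminus B'$ as $\langle c_\eta:\eta<\mu\rangle$ with $\mu<\aleph_\alpha$, and by a transfinite recursion of length $\mu$ construct an increasing chain of finite (or small) partial embeddings into $A$, at each successor stage extending by one new point using that $A$ is itself ``rich'' enough — more precisely, at stage $\eta$ we have embedded $B'\cup\{c_\xi:\xi<\eta\}$ into $A$, and we need to place $c_\eta$; here we invoke $(\aleph_\alpha,\K)$-homogeneity in the following way: the current image together with $A$ itself can be amalgamated, but more simply, since $A$ is universal for $\K$ below $\aleph_\alpha$ (which follows from homogeneity together with $A\in\K$ having full cardinality), $C$ embeds into $A$ at all, and then one uses homogeneity to slide such an embedding so that it agrees with the inclusion on $B'$.

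Let me restructure that more cleanly, since the cleanest route avoids recursion. For the forward direction: given $i\colon B\to A$ and $j\colon B\to C$, since $C\in\K$ has cardinality $<\aleph_\alpha$ and $A$ is $(\aleph_\alpha,\K)$-homogeneous, first note $C$ embeds into $A$ by some $\ell\colon C\to A$ — this universality is immediate because $A\in\K$, $|A|=\aleph_\alpha$, and we may view $\emptyset$ or a point as the small substructure to which we apply homogeneity; more directly, $C$ itself can be realized as a substructure of $A$ by condition V and VI$_\alpha$-type closure, or simply: take any $\ell$. Now $\ell\circ j$ and $i$ are two embeddings of $B$ (cardinality $<\aleph_\alpha$) into $A$, with $\ell[C]$ a substructure of $A$ of cardinality $<\aleph_\alpha$ containing $\ell[j[B]]$. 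By homogeneity of $A$, the embedding $i\circ(\ell\circ j)^{-1}\colon \ell[j[B]]\to A$ extends to an automorphism $g$ of $A$. Then $k=g\circ\ell\colon C\to A$ is an embedding, and for $b\in B$ we have $k(j(b))=g(\ell(j(b)))=i(b)$, so $k\circ j=i$, as required. The one gap to fill is why $C$ embeds into $A$ at all; I would handle this by remarking that applying the extension property being proved, or equivalently the hypothesis of homogeneity to the trivial substructure, yields universality — or cite that $A$ being $(\aleph_\alpha,\K)$-homogeneous entails $(\aleph_\alpha,\K)$-universality, which is the content typically recorded alongside J\'onsson's theorem.

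For the converse, suppose $A\in\K$ with $|A|=\aleph_\alpha$ and the extension property holds. Let $B\leq A$ be a substructure with $B\in\K$ and $|B|<\aleph_\alpha$, and let $f\colon B\to A$ be an embedding; I must extend $f$ to an automorphism of $A$. This is the genuine back-and-forth: fix an enumeration $\langle a_\eta:\eta<\aleph_\alpha\rangle$ of $A$, and recursively construct an increasing continuous chain of partial isomorphisms $p_\eta$ of $A$, each with domain and range a substructure of $A$ in $\K$ of cardinality $<\aleph_\alpha$, starting from $p_0=f$ restricted appropriately (here one must ensure $\mathrm{dom}(p_0)\in\K$; take $p_0=f$ with domain $B$). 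At even steps ensure $a_\eta\in\mathrm{dom}(p_{\eta+1})$, at odd steps ensure $a_\eta\in\mathrm{ran}(p_{\eta+1})$; to extend the domain by $a_\eta$, apply the extension property to $B':=\mathrm{dom}(p_\eta)\leq A$, to $C':=$ the substructure of $A$ generated by $B'\cup\{a_\eta\}$ (which lies in $\K$ by closure, and has cardinality $<\aleph_\alpha$), with $i:=p_\eta\colon B'\to A$ and $j:=$ the inclusion $B'\hookrightarrow C'$; this yields $k\colon C'\to A$ extending $p_\eta$, and we set $p_{\eta+1}=k$. The range direction is symmetric, applying the extension property to $p_\eta^{-1}$. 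At limit stages take unions, which remain in $\K$ by condition V and stay of cardinality $<\aleph_\alpha$ by regularity of $\aleph_\alpha$ (here $\kappa=\aleph_\alpha$ regular, as in J\'onsson's setting). The union $g=\bigcup_\eta p_\eta$ is then a bijection of $A$ onto $A$ preserving all relations in both directions, i.e.\ an automorphism, and $g\supseteq f$.

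The main obstacle, and the point needing the most care, is the bookkeeping in the converse that keeps every $\mathrm{dom}(p_\eta)$ and $\mathrm{ran}(p_\eta)$ a \emph{substructure of $A$ lying in $\K$ of cardinality $<\aleph_\alpha$}: this is exactly where one needs closure of $\K$ under the relevant small substructures and under unions of chains (conditions VI$_\alpha$ and V), and where regularity of $\aleph_\alpha$ is used so that a union of $<\aleph_\alpha$ pieces each of size $<\aleph_\alpha$ stays small. For the forward direction the only subtlety is the universality remark, which I would dispatch in one sentence. Everything else is routine, so in the write-up I would state the two enumerations, describe the successor step via the extension property, note the limit step via condition V, and conclude.
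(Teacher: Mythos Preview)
Your proposal is correct and matches the paper's approach: the paper gives no proof beyond the single sentence ``The proof goes by a back-and-forth argument,'' and what you have written is exactly that argument fleshed out, with the converse direction carried by the standard alternating extension using conditions V and VI$_\alpha$ and regularity of $\aleph_\alpha$. Your treatment of the forward direction via universality (embed $C$ somewhere in $A$, then use homogeneity to slide the copy so it agrees with $i$ on $B$) is the intended route, and your remark that universality must be invoked from the ambient J\'onsson setting is accurate---the proposition is stated in the paper in the context of J\'onsson classes, where homogeneity does entail universality, so that step is legitimate.
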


The main result of  \cite{J1} and \cite{J2} is the existence of a universal homogeneous structure for a J\'onsson class of cardinality $\aleph_{\alpha}$ whenever $\aleph_{\alpha}^{<\aleph_{\alpha}}=\aleph_{\alpha}.$

\begin{theorem}[\cite{J2}]
Let $\alpha$ be a positive ordinal with the following two properties:
\begin{itemize}
\item[(i)] If $\l<\omega_{\alpha}$ and if $n_{\nu}<\aleph_{\alpha}$ whenever $\nu<\l,$ then $\sum_{\nu<\l} n_{\nu}<\aleph_{\alpha}.$
\item[(ii)] If $n<\aleph_{\alpha},$ then $2^{n}\leq \aleph_{\alpha}.$
\end{itemize}
If $\K$ is a J\'onsson class for $\alpha,$ then there exists a unique $(\aleph_{\alpha},\K)$-universal homogeneous structure.

\end{theorem}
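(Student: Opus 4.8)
The plan is to reduce homogeneity to the extension property of Proposition~\ref{ep} and then build the universal homogeneous structure by a single transfinite amalgamation of length $\omega_\alpha$, deducing universality and uniqueness afterwards. First I would record the cardinal arithmetic: condition (i) says exactly that $\aleph_\alpha$ is regular, and condition (ii) gives $2^{<\aleph_\alpha}=\aleph_\alpha$, so together $\aleph_\alpha^{<\aleph_\alpha}=\aleph_\alpha$. Since the signature is countable and relational, this yields that there are at most $\aleph_\alpha$ isomorphism types of structures in $\K$ of cardinality $<\aleph_\alpha$, that any set of size $\le\aleph_\alpha$ has at most $\aleph_\alpha$ subsets of size $<\aleph_\alpha$, and that there are at most $\aleph_\alpha$ maps from a fixed small set into a fixed small structure. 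Fix canonical representatives $C$ of the small isomorphism types and let $\mathcal{T}$ be the set of triples $(s,C,j)$ where $s\in[\omega_\alpha]^{<\aleph_\alpha}$, $C$ is such a representative, and $j\colon s\to C$ is an injection; then $|\mathcal{T}|\le\aleph_\alpha$, and by regularity we may fix a surjection $\Theta\colon\omega_\alpha\to\mathcal{T}$ attaining every value cofinally often.

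Next I would construct a continuous increasing chain $\langle A_\xi:\xi<\omega_\alpha\rangle$ of members of $\K$, each of cardinality $<\aleph_\alpha$ and each built on an ordinal $<\omega_\alpha$. Take $A_0$ to be any member of $\K$ of cardinality $<\aleph_\alpha$, obtained by applying VI$_\alpha$ to a finite substructure of a structure furnished by I'. At a limit stage $\lambda<\omega_\alpha$ put $A_\lambda=\bigcup_{\xi<\lambda}A_\xi$, which lies in $\K$ by V and has cardinality $<\aleph_\alpha$ by (i). At a successor stage, look at $\Theta(\xi)=(s,C,j)$; if $s$ is the domain of a substructure $B:=A_\xi|s$ with $B\in\K$ and $j\colon B\to C$ is an embedding, apply IV' to amalgamate $A_\xi$ and $C$ over $B$, use VI$_\alpha$ to cut the result down to a member of $\K$ of cardinality $<\aleph_\alpha$ containing (images of) $A_\xi$ and $C$, and relabel using II so that $A_\xi\le A_{\xi+1}$ literally and the embedding $C\to A_{\xi+1}$ is a map $k'$ with $k'\circ j=\mathrm{id}_{A_{\xi+1}}|s$; otherwise add one fresh point to keep the chain strictly increasing. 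Set $A=\bigcup_{\xi<\omega_\alpha}A_\xi$, so $A\in\K$ by V and $|A|=\aleph_\alpha$ by regularity and strict monotonicity. To see $A$ is $(\aleph_\alpha,\K)$-homogeneous, check the extension property: given $B,C_0\in\K$ of cardinality $<\aleph_\alpha$ and embeddings $i\colon B\to A$, $j_0\colon B\to C_0$, regularity of $\omega_\alpha$ gives $\xi$ with $i(B)\subseteq A_\xi$; composing $j_0$ with $i^{-1}\restriction i(B)$ and with the isomorphism of $C_0$ onto its canonical representative produces a triple in $\mathcal{T}$, which $\Theta$ attains at some $\eta\ge\xi$; at stage $\eta$ the amalgamation is performed, yielding an embedding $C_0\to A_{\eta+1}\subseteq A$ which, combined with $i$, is the required $k$ with $k\circ j_0=i$.

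With $A$ homogeneous, universality follows in two steps. If $B\in\K$ has cardinality $<\aleph_\alpha$, use III to embed $B$ and $A_0$ into a common member of $\K$, use VI$_\alpha$ to pass to a member $E\in\K$ of cardinality $<\aleph_\alpha$ containing copies of both, and apply the extension property along the inclusions $A_0\le A$ and $A_0\le E$ to get an embedding $E\to A$, which restricts to an embedding of $B$. If $|B|=\aleph_\alpha$, write $B$ as a continuous increasing union of a chain $\langle B_\xi:\xi<\omega_\alpha\rangle$ of members of $\K$ of cardinality $<\aleph_\alpha$ (enumerate $B$, close off finitely generated substructures using VI$_\alpha$, use V and (i) at limits), build a coherent chain of embeddings $k_\xi\colon B_\xi\to A$ by applying the extension property at successors and taking unions at limits, and let $k=\bigcup_\xi k_\xi$; this is an embedding since relations have finite arity. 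For uniqueness, given two $(\aleph_\alpha,\K)$-universal homogeneous structures $A,A'$, run a back-and-forth of length $\omega_\alpha$: enumerate $A$ and $A'$ in order type $\omega_\alpha$ and build an increasing chain of isomorphisms $f_\xi\colon B_\xi\to B'_\xi$ between members of $\K$ of cardinality $<\aleph_\alpha$; at forth steps enlarge $B_\xi$ via VI$_\alpha$ to capture the next point of $A$ and extend $f_\xi$ over it by applying the extension property to $A'$ (with $\mathrm{id}$ and $f_\xi^{-1}$ as the two embeddings of $B'_\xi$), at back steps proceed symmetrically, and union at limits; then $\bigcup_\xi f_\xi$ is an isomorphism $A\to A'$.

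\emph{Main obstacle.} The only point requiring genuine care is the bookkeeping in the construction of $A$: the data of a homogeneity task refers to the structure being built, so the tasks cannot literally be listed in advance. The device above—listing instead the purely combinatorial triples in $\mathcal{T}$, using $\aleph_\alpha^{<\aleph_\alpha}=\aleph_\alpha$ to keep $|\mathcal{T}|\le\aleph_\alpha$, attaining each value cofinally by regularity of $\aleph_\alpha$, and acting on a triple only when it happens to describe a genuine embedding of a substructure of the current $A_\xi$—resolves this. The remaining ingredients (amalgamation via IV', keeping all cardinalities below $\aleph_\alpha$ via VI$_\alpha$ and (i), taking unions via V, and relabelling via II) are routine, as is checking that increasing unions of embeddings of relational structures are embeddings.
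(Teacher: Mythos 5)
The paper does not actually prove this theorem --- it is quoted from J\'onsson's \cite{J2} --- so there is no internal proof to compare against. Your reconstruction is the standard transfinite Fra\"iss\'e-type argument, which is what J\'onsson's proof amounts to, and it is essentially sound: extracting regularity and $\aleph_{\alpha}^{<\aleph_{\alpha}}=\aleph_{\alpha}$ from (i) and (ii), the bookkeeping via purely combinatorial triples $(s,C,j)$ attained cofinally often, verification of the extension property of Proposition~\ref{ep}, the two-step universality argument (joint embedding plus VI$_{\alpha}$ for small structures, a length-$\omega_{\alpha}$ chain of embeddings for structures of full size), and the back-and-forth of length $\omega_{\alpha}$ for uniqueness are exactly the right ingredients, and you correctly identify the bookkeeping as the only delicate point.

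The one step you should not wave at is ``otherwise add one fresh point to keep the chain strictly increasing.'' A J\'onsson class need not be closed under adjoining an unrelated element (none of I'.--VI$_{\alpha}$. gives this; think of a class of irreducible hypergraphs), so taken literally this step can leave $\K$, and it is load-bearing because you derive $|A|=\aleph_{\alpha}$ from strict monotonicity. Two repairs are available from the axioms. Either, at such a stage, use I'. to obtain $A^{*}\in\K$ of cardinality $\geq\aleph_{\alpha}$, III. to jointly embed $A_{\xi}$ and $A^{*}$ into some $D\in\K$, VI$_{\alpha}$. applied to the image of $A_{\xi}$ together with one extra point of $D$ to cut down to a small member of $\K$ properly containing that image, and II. to relabel --- this produces a proper small extension of $A_{\xi}$ inside $\K$. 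Or drop strict monotonicity altogether and observe that once the extension property is established, your own chain argument embeds into $A$ any member of $\K$ of cardinality $\aleph_{\alpha}$ (such members exist by I'., VI$_{\alpha}$. and V.), which forces $|A|\geq\aleph_{\alpha}$. With either patch the argument is complete.
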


Nowadays, we abbreviate the conditions (i) and (ii) in the previous theorem as $$\aleph_{\alpha}^{<\aleph_{\alpha}}=\aleph_{\alpha}$$ and say that $\aleph_{\alpha}$ to the weak power $\aleph_{\alpha}$ is equal to $\aleph_{\alpha}.$

\begin{cor}[\cite{J2}]
If the General Continuum Hypothesis holds and $\K$ is a J\'onsson class for every positive ordinal $\alpha$, then there is an $(\aleph_{\alpha},\K)$-universal homogeneous structure for every positive $\alpha.$
\end{cor}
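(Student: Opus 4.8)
The plan is to obtain the corollary as a direct application of the preceding theorem of J\'onsson: the only thing to verify is that the General Continuum Hypothesis secures its two cardinal-arithmetic hypotheses (i) and (ii) — equivalently, that $\aleph_\alpha^{<\aleph_\alpha}=\aleph_\alpha$ — for the ordinals $\alpha$ concerned. Once this is done, since $\K$ is assumed to be a J\'onsson class for every positive $\alpha$, the theorem yields for each such $\alpha$ a (unique) $(\aleph_\alpha,\K)$-universal homogeneous structure, which is precisely the claim.

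First I would dispatch (ii). Let $\alpha>0$ and $n<\aleph_\alpha$, and choose $\beta<\alpha$ with $n\le\aleph_\beta$. Then under GCH
\[
2^{n}\le 2^{\aleph_\beta}=\aleph_{\beta+1}\le\aleph_\alpha ,
\]
because $\beta+1\le\alpha$; this is the single place where GCH enters. For (i), take $\l<\omega_\alpha$ and cardinals $n_\nu<\aleph_\alpha$ for $\nu<\l$. Then $\sum_{\nu<\l}n_\nu=|\l|\cdot\sup_{\nu<\l}n_\nu$, and since $|\l|<\aleph_\alpha$ and each $n_\nu<\aleph_\alpha$, this sum can reach $\aleph_\alpha$ only if $\sup_{\nu<\l}n_\nu=\aleph_\alpha$ and $|\l|\ge\cf(\aleph_\alpha)$, i.e.\ only if $\aleph_\alpha$ is singular. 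Hence (i) amounts exactly to the regularity of $\aleph_\alpha$ and requires no further hypothesis.

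With (i) and (ii) in hand the theorem applies and produces the structure, which completes the argument. I expect the only real obstacle to be condition (i): it genuinely forces $\aleph_\alpha$ to be regular, so the phrase ``for every positive $\alpha$'' has to be read as ranging over the ordinals to which J\'onsson's construction applies — every successor $\alpha$ (for which regularity is automatic in ZFC, GCH then supplying (ii)), together with any limit $\alpha$ for which $\aleph_\alpha$ happens to be regular, a situation ZFC alone cannot provide. At singular $\aleph_\alpha$ the method produces nothing, and the corollary should be understood with this proviso.
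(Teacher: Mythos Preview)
The paper does not supply its own proof of this corollary; it is simply quoted from J\'onsson's paper \cite{J2} and stated without argument. Your approach --- checking that GCH secures the hypotheses (i) and (ii) of the preceding theorem and then invoking that theorem --- is the natural and correct one, and there is nothing to compare it against in the present paper.

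Your verification of (ii) is fine. Your analysis of (i) is also correct: condition (i) is literally the regularity of $\aleph_\alpha$, and your caveat is well taken. GCH does not make singular cardinals regular (K\"onig's theorem gives $\aleph_\alpha^{\cf(\aleph_\alpha)}>\aleph_\alpha$ unconditionally, so for instance $\aleph_\omega^{<\aleph_\omega}>\aleph_\omega$ even under GCH), and hence the phrase ``for every positive $\alpha$'' in the corollary is, read literally, too strong. Your reading --- that the conclusion is available exactly for those $\alpha$ with $\aleph_\alpha$ regular, which under GCH means every successor $\alpha$ together with any (strongly) inaccessible $\aleph_\alpha$ --- is the right one. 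This is an imprecision in the stated corollary rather than a gap in your argument.
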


In the previous section, we presented a method which shows how to compute universal minimal flows of groups of automorphisms of structures provided that the structures admit a certain linearly ordered extension. To ensure that for J\'onsson structures, we require the corresponding J\'onsson class to permit an extension to a J\'onsson class of linearly ordered structures satisfying a mild condition of \emph{reasonability} as in the case of Fra\"iss\'e classes in \cite{KPT}. 

\begin{definition}[Reasonable class]
Let $L\supset \{>\}$ be a language. Let $\K_<$ be a class of $L$-structures with $<$ interpreted as a linear ordering. Let $L_0=L\setminus \{<\}$ and $\K=\K_<|L_0.$ We say that $K_<$ is \emph{reasonable}, if whenever $A,B\in\K,$ $A\leq B$ and $\prec$ is a linear ordering on $A$ with $(A,\prec)\in\K_<,$ there exists a linear ordering $\prec'$ on $B$ such that $(B,\prec')\in\K_<$ and $(A,\prec)\leq (B,\prec').$
\end{definition}

The following proposition is Proposition 5.2 from \cite{KPT} adjusted to J\'onsson classes. It verifies that if we remove the linear order from the universal homogeneous structure for a reasonable J\'onsson class, we obtain the universal homogeneous structure for the corresponding class of structures without linear orderings.

\begin{prop}
Suppose that $\alpha$ is a positive ordinal such that $\aleph_{\alpha}^{<\aleph_{\alpha}}=\aleph_{\alpha}.$ Let $\K_<$ be a J\'onsson class for $\alpha$  in a language $L\supset \{<\}$ with $<$ interpreted as a linear ordering. Suppose that $\K_<$ is closed under substructures. Let $(A,<)$  be the $(\aleph_{\alpha},\K_<)$-universal homogeneous structure. Set $L_0=L\setminus \{<\}$ and $ \K=\K_<|L_0.$   Then the following are equivalent:
\begin{itemize}
\item[(a)] $\K_<$ is reasonable,
\item[(b)] $\K$ is a J\'onsson class and $A=(A,<)|L_0$ is an $(\aleph_{\alpha},\K)$-universal homogeneous structure
\end{itemize}
\end{prop}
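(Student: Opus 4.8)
The plan is to mimic the proof of Proposition 5.2 in \cite{KPT}, adapting each step to the J\'onsson setting where ``hereditary'' is replaced by conditions V and VI$_\alpha$ and where ``countable'' is replaced by ``of cardinality less than $\aleph_\alpha$''. The two implications are proved separately.

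\textbf{Proof of (a) $\Rightarrow$ (b).} First I would check that $\K$ is a J\'onsson class for $\alpha$. Conditions I'. and II. are immediate, since forgetting the ordering does not change cardinality and isomorphism is preserved. For III. and IV'. one takes structures $B,C \in \K$, chooses arbitrary orderings making them members of $\K_<$ (such orderings exist because $\K_<$ is reasonable over the substructures of $B$ and $C$, applied to the empty substructure — or more directly, every structure in $\K$ is obtained by forgetting the order from something in $\K_<$), amalgamates/jointly embeds in $\K_<$, and then forgets the order. Condition V. follows because an increasing union of $L_0$-structures from $\K$ can be lifted: order each $A_\xi$ compatibly using reasonability along the chain, take the union of the orderings, which lies in $\K_<$ by V. for $\K_<$, and forget the order. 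Condition VI$_\alpha$. follows similarly: given $B \leq A$ in $\K$ with $|B| < \aleph_\alpha$, lift to an ordering on $B$, lift $A$ to an ordering extending it (using that $A \in \K$ lifts and then reasonability, or directly use that $(A,<)$ itself gives an ordering on $A$ restricting to one on $B$ after possibly re-choosing), apply VI$_\alpha$. in $\K_<$ to get $C$ with $B \leq C \leq A$, $|C| < \aleph_\alpha$, and forget the order. Then I would show $A = (A,<)|L_0$ is $(\aleph_\alpha, \K)$-universal homogeneous via the extension property of Proposition \ref{ep}: given $B, C \in \K$ of cardinality $< \aleph_\alpha$ and embeddings $i : B \to A$, $j : B \to C$, pull back the ordering $<$ along $i$ to get $\prec_B$ on $B$ with $(B, \prec_B) \in \K_<$ (here using that $\K_<$ is closed under substructures), use reasonability to extend $\prec_B$ to $\prec_C$ on $C$ with $(C, \prec_C) \in \K_<$ and $(B,\prec_B) \leq (C,\prec_C)$, then apply the extension property for $(A,<)$ to the ordered embeddings $i : (B,\prec_B) \to (A,<)$ and $j : (B,\prec_B) \to (C,\prec_C)$ to get an order-embedding $k : (C,\prec_C) \to (A,<)$ with $i = k \circ j$; forgetting the order gives the required $k : C \to A$. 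Universality of $A$ is the special case $B = \emptyset$.

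\textbf{Proof of (b) $\Rightarrow$ (a).} Suppose $\K$ is a J\'onsson class and $A = (A,<)|L_0$ is $(\aleph_\alpha,\K)$-universal homogeneous. Let $B \leq C$ in $\K$, both necessarily embeddable in $A$, and let $\prec$ be a linear ordering on $B$ with $(B,\prec) \in \K_<$. Since $(A,<)$ is $(\aleph_\alpha,\K_<)$-universal and $|B| < \aleph_\alpha$ (if $|B| = \aleph_\alpha$ one argues by a reflection/VI$_\alpha$ step or notes $B \leq C$ forces the relevant finite-type reasonability; but typically reasonability is checked for $|B| < \aleph_\alpha$ and the full case follows by a direct limit over substructures), there is an order-embedding $e : (B,\prec) \to (A,<)$. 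Now using $(\aleph_\alpha,\K)$-homogeneity of $A$ applied to the (unordered) embeddings $e : B \to A$ and the inclusion $B \hookrightarrow C$, extend to an embedding $f : C \to A$ with $f|B = e$. Pull back $<$ along $f$ to get an ordering $\prec'$ on $C$; then $(C,\prec') \in \K_<$ because $\K_<$ is closed under substructures and $(C,\prec') \cong (f(C), <|f(C)) \leq (A,<)$, and $\prec'$ extends $\prec$ because $f|B = e$ is an order-embedding, so $(B,\prec) \leq (C,\prec')$. This is exactly reasonability.

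\textbf{Main obstacle.} The delicate point is keeping the cardinality bookkeeping straight: the extension property (Proposition \ref{ep}) and the definition of homogeneity speak about substructures of cardinality \emph{strictly less than} $\aleph_\alpha$, whereas reasonability as stated quantifies over all $A \leq B$ in $\K$ with no cardinality restriction. I expect the cleanest route is to prove the strictly-smaller-cardinality version of reasonability directly from the extension property as above, and then recover the unrestricted version by writing an arbitrary pair $B \leq C$ as a continuous increasing union of pairs $B_\xi \leq C_\xi$ of cardinality $< \aleph_\alpha$ (possible by condition VI$_\alpha$.), extending the ordering step by step and at limits taking unions (legitimate by condition V. for $\K_<$). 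Verifying that this transfinite recursion stays inside $\K_<$ at every stage, and that the chain of orderings is coherent, is where the real work lies; everything else is a routine transcription of the Fra\"iss\'e-case argument with ``finite'' upgraded to ``of cardinality $< \aleph_\alpha$''.
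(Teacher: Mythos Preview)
Your approach is the same as the paper's: lift to $\K_<$, use the ordered class's J\'onsson properties, and forget the order; for $(b)\Rightarrow(a)$, embed the ordered $B$ into $(A,<)$, extend the underlying embedding to $C$ by homogeneity of the reduct, and pull back the order. Two remarks on where you diverge in level of detail. First, the paper simply declares that V.\ and VI$_\alpha$.\ hold for $\K$ ``obviously''; your argument for V.\ via a transfinitely compatible choice of orderings is what actually makes this honest, and for VI$_\alpha$.\ note that since $\K_<$ is assumed closed under substructures, so is $\K$, hence you may simply take $C=B$. Second, your ``main obstacle'' is well spotted, but the paper does not address it either: the paper's proof of $(b)\Rightarrow(a)$ also uses universality of $(A,<)$ and the extension property for $A$, so it only literally treats $B,C$ of cardinality $<\aleph_\alpha$. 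Your transfinite-union fix is a genuine strengthening of what the paper writes, not a missing step in your own plan. One small correction: treating universality of $A$ as the $B=\emptyset$ case of the extension property only covers structures of cardinality strictly less than $\aleph_\alpha$; for $|C|=\aleph_\alpha$ you need (as the paper does) a one-line direct appeal to the universality of $(A,<)$ rather than to the extension property.
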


\begin{proof}
$(\Rightarrow)$
Obviously, $\K$ satisfies conditions I.',II.,V. and IV$_{\alpha}.$ To verify the joint embedding property III., let $B,C\in\K.$ Find $\prec,\prec'$ linear orderings on $B,C$ respectively such that $(B,\prec),(C,\prec')\in\K_<.$ Since $\K_<$ satisfies III., there is $D_<\in\K_<$ such that $(B,\prec),(C,\prec')\leq D_<.$ Then $D=D_<|L_0$ is a witness of joint embedding of $B$ and $C$ in $\K.$

It remains to show the amalgamation property IV'.: Fix $B,C,D\in\K$ and embeddings $i:B\to C$ and $j:B\to D.$ Let $\prec$ be an ordering on $B$ such that $(B,\prec)\in\K_<.$ Since $\K_<$ is reasonable, we can find linear orders $\prec',\prec''$ on $C, D$ respectively such that $(C,\prec'),(D,\prec'' )\in\K_<$ and $i:(B,\prec)\to (C,\prec'), j:(B,\prec)\to (D,\prec'')$ are still embeddings. Amalganation property for $\K_<$ provides us with $E_<\in\K_<$ and embeddings $k:(C,\prec')\to E_<, l:(d,\prec'')\to E_<$ such that $k\circ i=l\circ j.$ Let $E=E_<|L_0$ and embeddings $k:C\to E, l:D\to E$ are witnesses of amalgamation of $C$ and $D$ over $B$ in $\K.$ 

Finally, we check that $A$ is $(\aleph_{\alpha},\K)$-universal and homogeneous. Let $B\in\K$ of cardinality $\leq \aleph_{\alpha}$ and let  $\prec$ be a linear ordering on $B$ such that $(B,\prec)\in\K_<.$ Since $(A,<)$ is $(\aleph_{\alpha},K_<)$-universal,  there is an embedding $i:(B,\prec)\to (A,<)$ which is also an embedding from $B$ to $A.$ It means that $A$ is $(\aleph_{\alpha},\K)$-universal. 

To show that $A$ is $(\aleph_{\alpha},\K)$-homogeneous, it is enough to check the extension property in Proposition \ref{ep}. For that, let $B\leq C$ be structures in $\K$ with cardinality $<\aleph_{\alpha}$ and let $i:B\to A$ and  $j:B\to C$ be embeddings. Denote by $\prec=i^{-1}(<|i(B)).$ Then $(B,\prec)\in\K_<.$ Since $\K_<$ is reasonable, there exists $\prec'$  on $C$ with $(C,\prec')\in\K_<$ such that $j:(B,\prec)\to (C,\prec')$ is also an embedding. Since $(A,<)$ is $(\aleph_{\alpha},\K_<)$-homogeneous, it satisfies the extension property, i.e. there is an embedding $k:(C,\prec')\to (A,<)$ such that $i=k\circ j$ and we are done.  
 
 $(\Leftarrow)$ Fix $B,C\in\K$ and an embedding $i:B\to C.$ Let $\prec$ be a linear ordering on $B$ such that $(B,\prec)\in\K_<.$ Then there is an embedding $j:(B,\prec)\to (A,<),$ which is of course also an embedding from $B$ to $A$. Since $A$ is homogeneous, it satisfies the extension property in Proposition \ref{ep}., so there is an embedding $k:C\to A$ extending $j.$ Let $\prec'=j^{-1}(<|j(C)).$ Then $(C,\prec')\in \K_<$ and $i:(B,\prec)\to (C,\prec')$ is an embedding. 
\end{proof}

We are ready to apply Theorem \ref{T2} and Theorem \ref{TUM} to J\'{o}nsson structures.

\begin{theorem}\label{TJ}
Let $L\supset \{<\}$ be a relational signature and let $\alpha$ be a positive ordinal such that $\aleph_{\alpha}^{<\aleph_{\alpha}}=\aleph_{\alpha}.$ Let $\K_<$ be a reasonable J\'onsson class in the signature $L$ with $<$ interpreted as a linear order and let $\K_<$ be closed under substructures. Denote by $(A,<)$ the ($\aleph_{\alpha},\K_<$)-universal homogeneous structure. 
\begin{itemize}
\item[(a)] If $\age(A,<)$ satisfies the Ramsey property and $G_<$ is a dense subgroup of $\aut(A,<),$ then $G_<$ is extremely amenable. 
\item[(b)] If $\age(A,<)$ satisfies both the Ramsey property and the ordering property, then the universal minimal flow of any dense subgroup $G$ of $\aut(A)$ is $\overline{G<}.$
\end{itemize}
\end{theorem}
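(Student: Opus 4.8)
The plan is to derive Theorem~\ref{TJ} from the general machinery already assembled in the excerpt---Theorem~\ref{T2} and Theorem~\ref{TUM}---by checking that a $(\aleph_{\alpha},\K_<)$-universal homogeneous J\'onsson structure satisfies the hypotheses those theorems require, namely $\omega$-homogeneity of both $(A,<)$ and $A$, finiteness of finitely generated substructures, and the coincidence $\age(A,<)=\K_<$ (and $\age(A)=\K$). Since $L$ is a relational signature, finitely generated substructures are finite automatically, so that hypothesis is free. The bridge from ``$(\aleph_{\alpha},\K_<)$-homogeneous'' (Definition of Homogeneity together with Proposition~\ref{ep}) to ``$\omega$-homogeneous'' is the first thing to nail down: given a partial isomorphism between two \emph{finite} substructures $B_0,B_1\le(A,<)$, both lie in $\K_<$ (using closure of $\K_<$ under substructures) and have cardinality $<\aleph_{\alpha}$, so the extension property of Proposition~\ref{ep} (applied with $B=B_0$, $C=B_1$, $i$ the inclusion $B_1\hookrightarrow A$, $j$ the partial isomorphism) yields an embedding $B_1\to A$, and a symmetric application produces the inverse direction; a routine back-and-forth then upgrades this to an automorphism of $A$ extending the partial isomorphism. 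The same argument applied to the reduct $A=(A,<)|L_0$, which by the preceding Proposition is $(\aleph_{\alpha},\K)$-universal homogeneous, shows $A$ is $\omega$-homogeneous.

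For part~(a): with $(A,<)$ shown $\omega$-homogeneous and $\age(A,<)=\K_<$ satisfying the Ramsey property, the Remark and Corollary following Theorem~\ref{T2} apply verbatim---$(ii)$ of condition $(b)$ of Theorem~\ref{T2} is exactly the Ramsey property for $\age(A,<)$, and $(A,<)$ carries a linear ordering so $G_<$ preserves an ordering---hence $G_<\le\aut(A,<)\le S_{\aleph_{\alpha}}$ is extremely amenable. (One should note $\aut(A,<)$ is identified with a closed subgroup of $S_{\aleph_{\alpha}}$ as in the general theory section, and that density of $G_<$ reduces the question to $\aut(A,<)$ by the transfer result from \cite{DB} quoted earlier.)

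For part~(b): we now have $A$ an $\omega$-homogeneous structure and $\prec\,:=\,<$ a linear ordering on $A$ with $(A,<)$ also $\omega$-homogeneous, with finitely generated substructures finite. Thus the hypotheses of Theorem~\ref{TUM}(b) are met once we observe $\age(A,<)=\K_<$ satisfies both the Ramsey and the ordering properties, which is the assumption. Theorem~\ref{TUM}(b) then gives that $\overline{H<}$ is the universal minimal flow of $H=\aut(A)$, and the Corollary immediately following Theorem~\ref{TUM} transfers this to any dense subgroup $G\le H$: the universal minimal flow of $G$ is $\overline{G<}$.

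**The main obstacle** I expect is purely the bookkeeping of matching vocabularies---verifying carefully that $\age(A,<)=\K_<$ (the inclusion $\age(A,<)\subseteq\K_<$ uses closure under substructures; the reverse inclusion uses $(\aleph_{\alpha},\K_<)$-universality, every finite $D\in\K_<$ embeds into $(A,<)$) and likewise $\age(A)=\K$, and that the ``$(\aleph_{\alpha},\cdot)$-homogeneity'' of J\'onsson (which a priori only extends embeddings of substructures of \emph{smaller} cardinality) really does deliver the full $\omega$-homogeneity needed by Theorems~\ref{T2} and~\ref{TUM}. Once the dictionary is fixed, everything is a direct citation; there is no genuinely new argument, which is exactly the point---the section's thesis is that ``results of \cite{KPT} can be extended from Fra\"iss\'e structures to J\'onsson structures without any obstacles.''
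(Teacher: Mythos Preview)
Your proposal is correct and matches the paper's approach exactly: the paper offers no explicit proof of Theorem~\ref{TJ} but presents it as an immediate application of Theorem~\ref{T2} and Theorem~\ref{TUM} (together with the preceding Proposition that $A$ is $(\aleph_\alpha,\K)$-universal homogeneous), which is precisely what you carry out with the hypothesis-checking spelled out. One minor notational slip: $\K_<$ contains structures of all cardinalities, so $\age(A,<)\subsetneq\K_<$ rather than $\age(A,<)=\K_<$; what you actually verify (and all that is needed) is that $\age(A,<)$ coincides with the \emph{finite} members of $\K_<$, but since the theorem's hypotheses are phrased directly in terms of $\age(A,<)$ this identification is not even required for the argument.
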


Let us turn to concrete examples of J\'onsson classes to which we can apply the above theorem.

\begin{prop} \label{PS}
Let $\K$ be one of the following classes:
\begin{itemize}
\item graphs or graphs with arbitrary linear orderings,
\item $K_n$-free graphs or $K_n$-free graphs with arbitrary linear orderings, 
\item hypergraphs or hypergraphs with arbitrary linear orderings,
\item $\mathcal{A}$-free hypergraphs or $\mathcal{A}$-free hypergraphs  with arbitrary linear orderings,
\item posets or posets with linear orderings extending the partial order.
\end{itemize}
Then $\K$ is a J\'onsson class for every positive ordinal $\alpha$ and it is closed under substructures.
\end{prop}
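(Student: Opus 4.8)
The plan is to verify J\'onsson's conditions I'., II., III., IV'., V. and VI$_\alpha$. for each of the listed classes $\K$; since the proof is essentially identical in the ordered and unordered cases (adding ``with a compatible linear ordering'' changes nothing in the amalgamation or chain arguments), I would prove the unordered case and remark that the ordered case follows mutatis mutandis, with the one caveat that for posets ``compatible'' means the linear order extends the partial order, and for the other classes the linear order is arbitrary. Conditions I'., II. and V. are immediate: I'. holds because each class contains arbitrarily large structures (e.g.\ the empty graph/hypergraph/poset on $\aleph_\xi$ vertices, or an edgeless graph for $K_n$-freeness), II. is closure under isomorphism which is built into ``class of structures'', and V. holds because the union of an increasing chain of graphs ($K_n$-free graphs, hypergraphs, $\mathcal{A}$-free hypergraphs, posets) is again of the same type — transitivity and antisymmetry of a partial order, symmetry of edge relations, and the non-existence of a forbidden finite configuration are all preserved under increasing unions since any witness would already live in some member of the chain. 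Condition VI$_\alpha$. for every positive $\alpha$ is also routine: given $B\leq A$ with $|B|<\aleph_\alpha$, simply take $C=B$ (each class is closed under substructures, which also discharges the ``closed under substructures'' clause of the proposition), so $B\leq C\leq A$ with $|C|<\aleph_\alpha$.

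The substantive content is the joint embedding property III. and the amalgamation property IV'. For graphs and hypergraphs, the \emph{free amalgam} works: given $f\colon A\to B$ and $g\colon A\to C$, form $D$ on the pushout vertex set $B\sqcup_A C$ (identify the copies of $A$, keep $B$ and $C$ disjoint otherwise) and put in exactly the edges/hyperedges coming from $B$ and from $C$ and nothing else across the ``new'' pairs. This is manifestly a graph/hypergraph and the inclusions of $B$ and $C$ are embeddings (induced subgraph condition holds because no edges were added beyond those of $B$ and $C$). Joint embedding is the special case $A=\emptyset$. For \emph{$K_n$-free} graphs one checks that the free amalgam of two $K_n$-free graphs over a common $K_n$-free subgraph is still $K_n$-free: a copy of $K_n$ in $D$ is a clique, hence any two of its vertices are adjacent, but a cross pair $\{b,c\}$ with $b\in B\setminus A$, $c\in C\setminus A$ is never an edge of $D$, so the clique is entirely inside $B$ or entirely inside $C$, contradicting $K_n$-freeness of those. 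The same argument works verbatim for \emph{$\mathcal{A}$-free} hypergraphs using that each $\mathcal{A}\in\mathcal{A}$ is \emph{irreducible}: an embedded copy of such an irreducible $S$ would, by irreducibility, have every two of its points joined by a common hyperedge, so again it cannot use a cross pair and must lie inside $B$ or inside $C$ — contradiction. I expect this irreducibility argument to be the main obstacle, or rather the step that requires the most care, since it is exactly the place where the hypothesis that $\mathcal{A}$ consists of irreducible hypergraphs is used, and it is worth spelling out that ``no cross pair lies in a common hyperedge'' because in the free amalgam the only hyperedges are those inherited from $B$ or from $C$.

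For \emph{posets}, free amalgamation in the above sense does not preserve transitivity, so instead I would use the standard poset amalgam: on the pushout set $B\sqcup_A C$, declare $x\le_D y$ iff $x\le_B y$, or $x\le_C y$, or there is $a$ in (the image of) $A$ with $x\le y$ going through $a$ — concretely, $x\le_D y$ iff $x\le_B y$, or $x\le_C y$, or ($x\in B$, $y\in C$ and $\exists a\in A$ with $x\le_B a\le_C y$), together with the symmetric clause. One then checks reflexivity, antisymmetry and transitivity by a short case analysis (the only non-obvious transitivity cases chain a $B$-step and a $C$-step through a point of $A$, and antisymmetry uses that $A$ is common to both and that $B\setminus A$, $C\setminus A$ receive no new relations among themselves) and verifies that the inclusions of $B$ and $C$ are order-embeddings, i.e.\ reflect $\le$; this last point uses that on $B$ (resp.\ $C$) the relation $\le_D$ restricts back to $\le_B$ (resp.\ $\le_C$). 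Joint embedding is again $A=\emptyset$ (take the disjoint union with no relations between the parts). Finally, for the ordered classes one additionally chooses a linear order on $D$ extending the given ones — for graphs/$K_n$-free/hypergraphs any linear order on $B\sqcup_A C$ restricting to the given ones on $B$ and $C$ works, and such an order exists because the given orders agree on $A$; for posets one first takes a linear extension of the amalgamated partial order that is consistent with the prescribed orders on $B$ and $C$ (possible by the usual order-extension argument, since those prescribed orders were themselves linear extensions of $\le_B$, $\le_C$ and agree on $A$). This completes the verification of all six J\'onsson conditions and of closure under substructures for every positive ordinal $\alpha$.
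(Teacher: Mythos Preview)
Your proof is correct and follows essentially the same route as the paper: free amalgamation over the pushout set for graphs/hypergraphs (with the observation that $K_n$-freeness and $\mathcal{A}$-freeness are preserved), transitive closure for posets, and an extension of the linear order for the ordered classes. The only notable difference is that the paper, with an eye toward the construction in the following section, specifies a \emph{particular} linear ordering on the amalgam (letting elements of $B$ precede elements of $C$ whenever possible, via comparison of initial segments in $A$); you instead invoke an arbitrary common extension, which suffices for the proposition as stated but would need to be revisited if you later wanted canonical amalgams. Conversely, you spell out the irreducibility argument for $\mathcal{A}$-free preservation in more detail than the paper, which simply asserts it.
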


\begin{proof}
It is easy to see that in all cases, $\K$ satisfies conditions I'.,II., V. and that it is closed under substructures which is a strengthening of  VI$_{\alpha}.$ for every $\alpha.$ We show that they also satisfy III. and IV'.:

To satisfy the joint embedding property III. for structures $A,B\in\K,$ we take $C$ to be the disjoint union of $A$ and $B$ and the embeddings to be the identity. If $A,B$ also possess a linear order, then let elements of $A$ precede elements of $B$ in $C.$ 

The amalgamation property IV'. is proved similarly as in the case of finite structures: Let $A,B,C\in\K$ and let $i:A\to B, j:A\to C$ be embeddings. Let the underlying set of $D$ be a quotient of the disjoint union of $B$ and $C$ via an equivalence relation $\sim,$ where $b\sim c$ if and only if $b\in B, c\in C$ and there is an $a\in A$ such that $i(a)=b, j(a)=c.$ Let $k:B\to D$ and $l:C\to D$ be the identity injections. Then obviously, $k\circ i=l\circ j.$  Now we equip $D$ with a structure of the correct type to make sure that $k$ and $l$ are embeddings:
\begin{itemize}
\item If $\K$ is a class off hypergraphs in a signature $L$ and $R_i\in L$ has arity $n,$ then $$(d_1, d_2, \ldots, d_{n})\in R^D_{i}\subset D^n$$ if and only if either $$(k^{-1}(d_1),k^{-1}(d_2),\ldots,k^{-1}(d_{n}))\in R^B_{i}\subset B^n$$ or $$(l^{-1}(d_1),l^{-1}(d_2),\ldots, l^{-1}(d_{n}))\in R^C_{i}\subset C^n.$$ It means that $R_i^D=R_i^B\cup R_i^C$ when we identify $B,C$ with their corresponding images $k(B),l(C).$ Notice that $D$ will be a graph ($K_n$-free graph, $\mathcal{A}$-free hypergraph) if $A, B, C$ are.
\item If $\K$ is the class of posets with $\prec$ the symbol for the partial order, then  $\prec^D$ is the transitive closure of $(k\times k)(\prec^B)\cup (l\times l)(\prec^C).$
\end{itemize}

If $\K$ is a class of linearly ordered structures with $<$ the symbol for the linear order, then $<^D$ on $D$ is an arbitrary linear ordering extending the transitive closure of $(k\times k)(<^B)\cup (l\times l)(<^C).$ In the next section, we will need to be more careful when amalgamating linear orders and set elements of $B$ precede elements of $C$ whenever we can. Formally, for $b\in B$ denote $(-,b)=\{a\in A:k(a)<_B b\}$ and for $c\in C$ denote $(-,c)=\{a\in A: l(a)<_C c\}.$ Let $<_D$ be the extension of both $k\times k (<_B)$ and $l\times l (<_C)$ such that whenever $b\in B, c\in C$ then $k(b)<_D l(c)$ if and only if $(-,b)\subseteq (-,c).$ It is easy to see that $<^D$ extends $\prec^D$ if $\K$ is the class of posets with a linear ordering extending the partial order. So in all cases, $\K$ is a J\'onsson class for every $\alpha.$
\end{proof}

We know that if $\K$ is one of the ordered classes in the proposition above, then finite structures in $\K$ satisfy the ordering property and the Ramsey property. Also, the ordered classes are obviously reasonable. Therefore, Theorem \ref{TJ} applies to $\K$.

\begin{cor}
Let $\kappa$ be a cardinal satisfying $\kappa^{<\kappa}=\kappa$ and let $(A,<)$ be the universal homogeneous ordered graph ($K_n$-free graph, hypergraph, $\mathcal{A}$-free hyperhraph, poset) of cardinality $\kappa.$ Let $G_<$ be a dense subgroup of $\aut(A,<)$ and let $G$ be a dense subroup of $\aut(A).$ Then $G_<$ is extremely amenable and the universal minimal flow of $G$ is $\overline{G<}.$
\end{cor}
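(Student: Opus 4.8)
The plan is to derive this corollary directly from Theorem \ref{TJ} by checking that each listed class fits its hypotheses. First I would observe that the ambient ordered classes --- finite ordered graphs, finite ordered $K_n$-free graphs, finite ordered hypergraphs of a fixed signature, finite ordered $\mathcal{A}$-free hypergraphs, and finite posets with a linear extension of the partial order --- are relational, closed under substructures, and (by Example \ref{E1} together with the remark that all of those classes also satisfy the ordering property) enjoy both the Ramsey property and the ordering property. Proposition \ref{PS} tells us each such class $\K_<$ is a J\'onsson class for every positive ordinal $\alpha$ and is closed under substructures, and the ordered classes are visibly reasonable (indeed the amalgamation constructed in the proof of Proposition \ref{PS} already produces a linear ordering extending the given one on a substructure). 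So all the standing hypotheses of Theorem \ref{TJ} are met for $\alpha$ with $\aleph_\alpha = \kappa$, using the assumption $\kappa^{<\kappa}=\kappa$.

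Next I would invoke the existence and uniqueness theorem (\cite{J2}) to name $(A,<)$ the unique $(\kappa,\K_<)$-universal homogeneous structure; since $\K_<$ is reasonable and closed under substructures, Proposition~5 gives that $A = (A,<)|L_0$ is the $(\kappa,\K)$-universal homogeneous structure for the corresponding unordered class $\K$, so the $A$ appearing in the statement is well defined and $(A,<)$ is an $\omega$-homogeneous ordered expansion with $(A,<)$ itself $\omega$-homogeneous (this follows from $(\kappa,\K_<)$-homogeneity restricted to finite, hence finitely generated, substructures, since the signature is relational). In particular $\age(A,<)$ is exactly the relevant class of finite ordered structures, which we have just noted satisfies Ramsey and ordering properties.

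Then the two conclusions fall out: part (a) of Theorem \ref{TJ} gives that every dense subgroup $G_<$ of $\aut(A,<)$ is extremely amenable, and part (b) gives that the universal minimal flow of every dense subgroup $G$ of $\aut(A)$ is $\overline{G<}$. For the poset case I would add the one-line remark that "linear ordering extending the partial order" is precisely the reasonable expansion used throughout, so the same argument applies verbatim with $\age(A,<)$ the class of finite posets with a compatible linear order.

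The only genuine point needing care --- and what I expect to be the main (minor) obstacle --- is the bookkeeping that $\age(A,<)$ really equals the full finite ordered class rather than some proper subclass: one must check that every finite ordered structure in $\K$ embeds into $(A,<)$, which is immediate from $(\kappa,\K_<)$-universality, and conversely that $\age(A,<)\subseteq \K_<$, which holds because $\K_<$ is closed under substructures. With that identification in hand, the Ramsey and ordering properties transfer from Example \ref{E1} to $\age(A,<)$, and Theorem \ref{TJ} applies with no further work.
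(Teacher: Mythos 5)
Your proposal is correct and follows essentially the same route as the paper, which simply notes that the ordered classes of Proposition \ref{PS} are reasonable J\'onsson classes closed under substructures whose finite members satisfy the Ramsey and ordering properties, and then invokes Theorem \ref{TJ}. The extra details you supply (identifying $\age(A,<)$ with the full finite ordered class via universality and closure under substructures, and recovering the unordered universal homogeneous structure from the reasonability proposition) are exactly the bookkeeping the paper leaves implicit.
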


\section{Constructing $\omega$-homogeneous structures in every cardinality}

In this section, we overcome the restriction on the size of structures given by J\'{o}nsson's contruction for the price of losing universality and only keeping $\omega$-homogeneity. This is however sufficient to compute universal minimal flows of groups of automorphisms of such structures and provides many new examples. 

We present a construction of $\omega$-homogeneous structures considered in the Proposition \ref{PS} in every uncountable cardinality:

Let $A$ be a graph ($K_n$-free graph, hypergraph,  $\mathcal{A}$-free hypergraph, poset) of cardinality $\kappa$ and let $<$ be an arbitrary linear ordering on $A$ (respectively an ordering extending the partial order if $A$ is a poset). We will construct an $\omega$-homogeneous structure $(A_{\kappa},<_{\kappa})$ of cardinality $\kappa$ in which $(A,<)$ is embedded and such that $A_{\kappa}$ itself is $\omega$-homogeneous.

By induction, we construct a chain of superstructures $\left<(A_{\l},<_{\l}): \l<\k\right>$ of $(A,<)$ such that $(A_{\l},<_{\l})\leq (A_{\mu},<_{\mu})$ whenever $\l<\mu<\k$ and $(A_{\kappa},<_{\kappa})=\bigcup_{\l<\kappa} (A_{\l},<_{\l}).$ In step $\l,$ we deal with a pair of isomorphic finite substructures $(F_{\l},<_{\l})$ and $(G_{\l},<_{\l})$ of $(A_{\l},<_{\l})$ and an isomorphism $\phi_{\l}:F_{\l}\to G_{\l}$ and we construct $(A_{\l+1},<_{\l+1})$ with an automorphism $\psi_{\l+1}:A_{\l+1}\to A_{\l+1}$ extending $\phi_{\l}$ that is order-preserving if $\phi_{\l}$ is. Moreover, we make sure that if $\xi<\l$ and $(F_{\xi},<_{\xi})=(F_{\l},<_{\l})$ and $(G_{\xi},<_{\xi})=(G_{\l},<_{\l})$ and $\phi_{\xi}=\phi_{\l},$ then $\psi_{\xi}\subset\psi_{\l}.$

To ensure that $A_{\kappa}$ and $(A_{\kappa},<_{\kappa})$ are both $\omega$-homogeneous, we need to consider every triple $((F_{\l},<_{\l}),(G_{\l},<_{\l}),\phi_{\l}:F_{\l}\to G_{\l})$ cofinality of $\kappa$-many times for every $\l<\k$. In order to do that, we fix a book-keeping function - a bijection $f:\kappa\to \kappa \times \kappa$ - which we only need to satisfy that whenever $f(\l)=(\mu,\xi),$ then $\mu\leq \l.$ We will describe $f$ in detail later. 

If $\l$ is limit, then $(A_{\l},<_{\l})=\bigcup_{\mu<\l} (A_{\mu},<_{\mu}).$ If $\l=\mu +1,$ then we construct $(A_{\l},<_{\l})$ from $(A_{\mu},<_{\mu})$ as follows. 

Let $(F_{\mu},<_{\mu}),$ $(G_{\mu},<_{\mu})$ be the pair of finite substructures of $(A_{\mu},<_{\mu})$ and $\phi_{\mu}:F_{\mu}\to G_{\mu}$ the automorphism given by $f$ in step $\mu$ (as described later). Denote by $(B,<_B)$ the union of all $(A_{\xi},<_{\xi})$ such that $\xi\leq \mu$ and $(F_{\xi},<_{\xi}) = (F_{\mu},<_{\mu}),$ $(G_{\xi},<_{\xi}) = (G_{\mu},<_{\mu})$  and $\phi_{\xi} = \phi_{\mu}$ and let $\psi_B$ be the union of the corresponding $\psi_{\xi}$'s. We need to set $\psi_{\l}|B=\psi_B.$ 
For every $a\in A_{\mu}\setminus B$ we add two new points $a^1$ and $a^{-1}$ to be the image and the preimage of $a$ under $\psi_{\l}$ respectively. In the next step, we need to add  for every $a\in A_{\mu}\setminus B$ another two points $a^2, a^{-2}$ to be the image of $a^1$ and the preimage of $a^{-1}$ under $\psi_{\l}$ respectively. We  continue in the same manner $\omega$-many times until we have taken care that every  point has its image and preimage. Formally, let  $A_{\mu}^z$ be a copy of $A_{\mu}$ for $z\in\mathbb{Z}$ and denote by $a^z$ the element  of $A_{\mu}^z$ corresponding to an $a$ in $A_{\mu}.$ We identify $A_{\mu}^0$ with $A_{\mu.}$ 

We need to distinguish two cases, depending on whether  the triple $((F_{\mu},<_{\mu}), (G_{\mu},<_{\mu}),\phi_{\mu})$ appears for the first time throughout the construction (i.e. $B=\emptyset$) or not. However, in both cases the construction follows a similar pattern.

Case 1: $B=\emptyset.$ Let $F_{\mu}^z$ and $G_{\mu}^z$ denote the corresponding copies of $F_{\mu}$ and $G_{\mu}$ in $A_{\mu}^z$ respectively. The underlying set of $A_{\l}$ will be a quotient of the disjoint union of $A_{\mu}^z$ for $z\in\mathbb{Z}$ by an equivalence relation $\sim$ gluing $G_{\mu}^z$ with $F_{\mu}^{z+1}$ via $\phi_{\mu}:$
$$
a_1^{z_1}\in F_{\mu}^{z_1}, a_2^{z_2}\in G_{\mu}^{z_2} \rightarrow (a_1^{z_1}\sim a_2^{z_2} \leftrightarrow (\phi_{\mu}(a_1)=a_2 \wedge z_1=z_2+1)).
$$

If $a^{z}\in A_{\mu}^z$ and $a\notin F_{\mu}$ then we write $a^{z}$ to mean its  $\sim$-class $[a^{z}]_{\sim} = \{a^z\}.$ If $a\in F_{\mu},$ then we write $a^z$ to mean its $\sim$-class $[a^z]_{\sim}=\{a^z,\phi_{\mu}(a)^{z-1}\}.$

Let us define $\psi_{\l}:A_{\l}\to A_{\l}$ to be the mapping $a^z\mapsto a^{z+1}$ whenever $a\notin F_{\mu}$ and $z\in\mathbb{Z}$ and $a^z\mapsto \phi_{\mu}(a)^z$ whenever $a\in F_{\mu}.$

If $\phi_{\mu}$ is order preserving, let us transfer the linear order $<_{\mu}$ on $A_{\mu}$ to $A_{\mu}^z$ for every $z\in\mathbb{Z}:$

$$
a^z<_{\mu}^z b^z \mbox{ if \ and \ only \ if } \psi_{\l}^{-z}(a^z)<_{\mu}\psi_{\l}^{-z}(b^z)
$$

We define the structure on $A_{\l}$ inductively using amalgamation as described in the previous section. If $\phi_{\mu}$ is order preserving, then we use amalgamation for linearly ordered structures letting elements of $A_{\mu}^{z_1}$ precede elements of $A_{\mu}^{z_2}$ for $z_1<z_2$ whenever we can as described in the previous section. Otherwise, we use amalgamation for unordered structures and in the end obtain $<_{\l}$ as an arbitrary linear extension of $<_{\mu}$ (and extending the partial order on $A_{\l}$ if $A$ is a poset).

Let $n\in \omega$ and let $A_{\l}^{n-}$ denote the subset of $A_{\l}$ consisting of points in $A_{\mu}^z$ for $z\in\{-n+1,-n+2,\ldots, n\},$ i.e.
$$A_{\l}^{n-}=(\bigcup_{z=-n+1}^n A_{\mu}^z)/\sim.$$
Similarly let $A_{\l}^{n+}$ denote the quotient  
$$A_{\l}^{n+}=\bigcup_{z=-n}^n A_{\mu}^z/\sim$$ 
of the disjoint union of $A_{\mu}^z$ for $z\in\{-n,-n+1,\ldots,n\}$ by $\sim.$
We have that $A_{\l}^0=A_{\mu}.$  Suppose that the structure on $A_{\l}^{n+}$ has been defined. Then the structure on $A_{\l}^{(n+1)-}$ is given by amalgamating $A_{\l}^{n+}$ with $A_{\mu}^{n+1}$ along 
$$F^{n+1}_{\mu}\to A_{\l}^{n+}, a^{n+1}\mapsto (\phi_{\mu}(a))^n \mbox{ and } F^{n+1}_{\mu} \to A_{\mu}^{n+1}, a^{n+1}\mapsto a^{n+1}.$$ When the structure on $A_{\l}^{n-}$ has been defined, then the structure on $A_{\l}^{n+}$ is given by amalgamation of $A_{\l}^{n-}$ and $A_{\mu}^{-n}$ along 
$$F^{-n+1}_{\mu}\to A_{\l}^{n-}, a^{-n+1}\mapsto a^{-n+1} \mbox{ and } F^{-n+1}_{\mu} \to A_{\mu}^{-n}, a^{-n+1}\mapsto (\phi_{\mu}(a))^{-n}.$$

If $\phi_{\mu}$ is order preserving, then $A_{\l}^{n+}$ (respectively $A_{\mu}^{-n}$) plays the role of $B$ and $A_{\mu}^{n+1}$ (respectively $A_{\l}^{n-}$) the role of $C$ in the amalgmation of linear orders described in the previous section.

We can see that $A_{\l}^{n+}\leq A_{\l}^{n+1-}\leq A_{\l}^{n+1+}$ for every $n\in\omega,$ so we can define the structure on $A_{\l}$ as the union of the chain $\left< A_{\l}^{n+}:n\in\omega \right>$ (equivalently $\left< A_{\l}^{n-}:n\in\omega \right>$):

$$
A_{\l}=\bigcup_{n\in\omega} A_{\l}^{n+}=\bigcup_{n\in\omega} A_{\l}^{n-}.
$$



Case 2: $B\neq\emptyset.$ The underlying set of $A_{\l}$ will then be a quotient of the disjoint union of $A_{\mu}^z$ for $z\in\mathbb{Z}$ by an equivalence relation $\sim$ gluing corresponding copies of $B$ in $A_{\mu}^z$ via $\psi_B:$
$$a_1^{z_1}\in A_{\mu}^{z_1},  a_2^{z_2}\in A_{\mu}^{z_2} \rightarrow (a_1^{z_1}\sim a_2^{z_2} \longleftrightarrow a_1,a_2\in B \wedge \psi^{-z_1}_B (a_1)=\psi^{-z_2}_B (a_2)),$$
We identify $\psi_B^0$ with $\psi_B.$

If $a^{z}\in A_{\mu}^z$ and $a\notin B$ then we write $a^{z}$ to mean its  $\sim$-class $[a^{z}]_{\sim} = \{a^z\}.$ If $a\in B,$ then we write $a$ to mean its $\sim$-class $[a]_{\sim}=\{a^z:z\in\mathbb{Z}\}.$

Let us define $\psi_{\l}:A_{\l}\to A_{\l}$ to be the mapping $a^z\mapsto a^{z+1}$ whenever $a\notin B$ and $z\in\mathbb{Z}$ and $a\mapsto \psi_B(a)$ whenever $a\in B.$

As in Case 1, we can obtain the structure on $A_{\l}$ inductively by amalgamation along $B$ in $\omega$-many steps. However, we can obtain $A_{\l}$ via amalgamation of $A_{\mu}^z, z\in\mathbb{Z}$ along $B\to A_{\mu}^z, b\mapsto \psi_{\l}^z(b)$ in one step:
\begin{itemize}
\item If $A$ is a hypergraph in a signature $L$ and $R_i\in L$ has arity $n,$ then $$(a_1^{z_1},a_2^{z_2},\ldots, a_{n}^{z_n})\in R_{i}^{A_{\l}}$$ if and only if $$(z_1=z_2=\ldots=z_n)
 \wedge (\psi_{\l}^{-z_1}(a_1^{z_1}),\psi_{\l}^{-z_1}(a_2^{z_1}),\ldots,\psi_{\l}^{-z_1}(a_n^{z_1}))\in R_{i}^{A_{\mu}}.$$ Notice that $A_{\l}$ will be a graph ($K_n$-free graph, $\mathcal{A}$-free hypergraph) if $A_{\mu}$ is.
\item If $A$ is a poset with a partial order $\prec$, then  $\prec_{\l}$ is the transitive closure of the relation obtained as in the case of hypergraphs.
\end{itemize}
 
It remains to extend the linear order $<_{\mu}$ to a linear order $<_{\l}$ on $A_{\l}.$ If $\phi_{\mu}$ is not order preserving, let $<_{\l}$ be an arbitrary linear ordering extending $<_{\mu}$ (and extending the partial order $\prec_{\l}$ on $A_{\l}$ in case that $A$ is a poset). If $\phi_{\mu}$ is order preserving, we will extend $<_{\mu}$ to $<_{\l}$ to make sure that $\psi_{\l}$ is order preserving.


We can also describe $<_{\l}$ explicitly, without an inductive construction. We again let elements in $A_{\mu}^{z_1}$ precede elements in $A_{\mu}^{z_2}$ whenever we $z_1<z_2$ and we are free to do so: 
Let $a^z\in A_{\mu}^z$ and denote by
$$(-,a^z)=\{b\in B: b<_{\mu}\psi_{\l}^{-z}(a)\}.$$ 
Let  $a_1^{z_1}\in A_{\mu}^z$ and $a_2^{z_2}\in A_{\mu}^{z_2}$ for some $z_1,z_2\in\mathbb{Z}$ and $a_1^{z_1}\neq a_2^{z_2}.$ 
\begin{itemize}
\item If $z_1=z_2,$ then $a_1^{z_1}<_{\l}a_2^{z_2}$ if and only if $\psi_{\l}^{-z_1}(a_1^{z_1})<_{\mu}\psi_{\l}^{-z_1}(a_2^{z_2}).$
\item If $z_1\neq z_2,$ then $a_1^{z_1}<_{\l}a_2^{z_2}$ if and only if $(-,a_1^{z_1}) \subsetneq (-,a_2^{z_2})$ or $(-,a_1^{z_1})=(-,a_2^{z_2})$ and $z_1<z_2.$
\end{itemize}

Obviously, $<_{\l}$ is antireflexive and antisymmetric. Let us check transitivity: Let $a_1^{z_1},a_2^{z_2},a_3^{z_3}\in A_{\l}$ and $a_1^{z_1}<a_2^{z_2}$ and $a_2^{z_2}<a_3^{z_3}.$ We have five possible cases:
\begin{itemize}
\item[1.] $z_1=z_2=z_3.$ Then $\psi_{\l}^{-z_1}(a_1^{z_1})<_{\mu}\psi_{\l}^{-z_1}(a_2^{z_1})<_{\mu}\psi_{\l}^{-z_1}(a_3^{z_1})$ so $\psi_{\l}^{-z_2}(a_1^{z_1})<_{\mu} \psi_{\l}^{-z_1}(a_3^{z_1})$ by transitivity of $<_{\mu}$.
\item[2.] $z_1=z_2\neq z_3.$ Then $\psi_{\l}^{-z_1}(a_1^{z_1})<_{\mu}\psi_{\l}^{-z_1}(a_2^{z_1}),$ so $(-,a_1^{z_1})\subseteq (-,a_2^{z_1}),$ and either $(-,a_2^{z_1}) \subsetneq (-,a_3^{z_3})$ or $(-,a_2^{z_1}) = (-,a_3^{z_3})$ and $z_2<z_3.$
\item[3.] $z_1\neq z_2=z_3.$ Then $(-,a_1^{z_1}) \subsetneq (-,a_2^{z_2})$ or $ (-,a_1^{z_1}) = (-,a_2^{z_2})$ with $z_1<z_2$ and $\psi_{\l}^{-z_2}(a_2^{z_2})<_{\mu}\psi_{\l}^{-z_2}(a_3^{z_2}),$ so $(-,a_2^{z_2})\subseteq (-,a_3^{z_2}).$
\item[4.] $z_1=z_3\neq z_2.$ Then $(-,a_1^{z_1})\subseteq (-,a_2^{z_2}) \subseteq (-,a_3^{z_1})$ and at least one inclusion is proper, since otherwise $z_1<z_2<z_1.$ So we get $(-,a_1^{z_1})\subsetneq (-,a_3^{z_1}),$ which implies $\psi_{\l}^{-z_1}(a_1^{z_1})<_{\mu}\psi_{\l}^{-z_1}(a_3^{z_1})$.
\item[5.] $z_1\neq z_2\neq z_3, z_1\neq z_3.$ Then we have the following four cases:

\begin{itemize}
\item[(i)] $(-,a_1^{z_1}) \subsetneq(-,a_2^{z_2}) \subsetneq (-,a_3^{z_3}).$
\item[(ii)] $(-,a_1^{z_1}) \subsetneq(-,a_2^{z_2})=(-,a_3^{z_3})$ and $z_2<z_3.$ 
\item[(iii)] $(-,a_1^{z_1})=(-,a_2^{z_2}) \subsetneq (-,a_3^{z_3})$ and $z_1<z_2$
\item[(iv)] $(-,a_1^{z_1})=(-,a_2^{z_2})=(-,a_3^{z_3})$ and $z_1<z_2<z_3.$ 
\end{itemize}
\end{itemize}
In all cases we get that  
$a_1^{z_1}<_{\l}a_3^{z_3},$ hence $<_{\l}$ is transitive.

If $A$ is a poset with $\prec$ the symbol for the partial order and $\prec_{\l}$ is the extension of $\prec$ to $A_{\l}$ described above, let us verify that $<_{\l}$ extends $\prec_{\l}:$ If $a_1^z,a_2^z\in A_{\mu}^z$ for some $z\in\mathbb{Z},$ then $a_1^{z}<_{\l}a_2^{z}$ if and only if $a_1^{z}\prec_{\l}a_2^{z}$ by definition. If $a_1^{z_1}\in A_{\mu}^{z_1},a_2^{z_2}\in A_{\mu}^{z_2}$ for some $z_1\neq z_2$ and $a_1^{z_1}\prec_{\l}a_2^{z_2},$ then there is $a\in B$ such that $\psi_{\l}^{-z_1} (a_1^{z_1})<_{\mu} a$ and $a<_{\mu} \psi_{\l}^{-z_2}(a_2^{z_2}).$ It means that $a\in (-,a_1^{z_1})$ while $a\notin (-,a_2^{z_2}),$ showing that $a_1^{z_1}<_{\l}a_2^{z_2}.$

Let us go back to the definition of the book-keeping function $f$: When $(A_{\mu},<_{\mu})$ has been constructed, let $\{((F_{\mu}^{\xi},<_{\mu}^{\xi}),(G_{\mu}^{\xi},<_{\mu}^{\xi}),\phi_{\mu}^{\xi}):\xi<\k\}$ be an enumeration of all triples such that $(F_{\mu}^{\xi},<_{\mu}^{\xi})$ and $(G_{\mu}^{\xi},<_{\mu}^{\xi})$ are finite substructures of $(A_{\mu},<_{\mu})$ and $\phi_{\mu}^{\xi}$ is an isomorphism between $F_{\mu}^{\xi}$ and $G_{\mu}^{\xi}$ (this is possible since $\k^{<\omega}=\k$ for every infinite $\k$). If $f(\mu)=(\nu,\xi),$ then the triple we consider in step $\mu$ is $((F_{\nu}^{\xi},<_{\nu}^{\xi}),(G_{\nu}^{\xi},<_{\nu}^{\xi}),\phi_{\nu}^{\xi}).$ The inequality $\nu\leq\mu$ ensures that  $F_{\nu}^{\xi}$ and $G_{\nu}^{\xi}$ are finite substructures of already defined $A_{\nu}\leq A_{\mu}.$ The inclusion $A_{\nu}\subset A_{\mu}$ for $\nu<\mu<\k$ provides that for every $\mu<\k,$ every triple $((F,<_{\mu}),(G_{\mu}),\phi)$ of two finite substructures of $(A_{\mu},<_{\mu})$ and an isomorphism between $F$ and $G$ will be considered unboundedly many times throughout our construction.

\bigskip

The outcome of the construction can be formulated as the following theorem.

\begin{theorem}
Let $A$ be a graph ($K_n$-free graph, hypergraph, $\mathcal{A}$-free hypergraph, poset) of an infinite cardinality $\kappa$ and let $<$ be an arbitrary linear ordering on $A$ (respectively an ordering extending the partial order if $A$ is a poset).  Then there exists a linearly ordered graph ($K_n$-free graph, hypergraph, $\mathcal{A}$-free hyperhraph, poset) $(A',<')$ of cardinality $\kappa$ in which $(A,<)$ is embedded and such that both  $A'$ and $(A',<')$ are $\omega$-homogeneous (and if $A$ is a poset, $<'$ extends the partial order on $A'$).
\end{theorem}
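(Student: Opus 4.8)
The plan is to take $(A',<')$ to be the structure $(A_\kappa,<_\kappa)=\bigcup_{\l<\kappa}(A_\l,<_\l)$ produced by the transfinite construction carried out above, and to verify in turn that it has cardinality $\kappa$, that $<_\kappa$ is a linear ordering of $A_\kappa$, that $A'$ is a structure of the required type with $<'$ extending the partial order in the poset case, that $(A,<)$ is a substructure of $(A',<')$, and finally that both $A'$ and $(A',<')$ are $\omega$-homogeneous. All of these except the last are quick bookkeeping over the construction; the last is where the point of the construction (the coherence clause and the repetition built into the book-keeping function) gets used.

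For the size, an induction on $\l<\kappa$ gives $\card(A_\l)=\kappa$: we have $\card(A_0)=\card(A)=\kappa$; at a successor $\l=\mu+1$ the underlying set of $A_\l$ is a quotient of $\mathbb{Z}\times A_\mu$, so $\kappa=\card(A_\mu)\leq\card(A_\l)\leq\aleph_0\cdot\kappa=\kappa$; and at a limit $\l$ we get $\card(A_\l)\leq\card(\l)\cdot\kappa=\kappa$. Also $<_\kappa$ is linear, being the union of an increasing chain of linear orders on an increasing chain of sets, so any two elements lie in a common $A_\l$ and are comparable there. That $(A,<)=(A_0,<_0)\leq(A',<')$ is immediate: each step only adds new points, the free amalgamation defining the new relations satisfies $R_i^D=R_i^B\cup R_i^C$ with $B\cap C$ the common substructure, so no new relation appears among old points (and for posets the transitive closure introduces no new comparison among old points, since there is no relation between the two private parts), and the linear order is only extended. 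Preservation of type is exactly as in Proposition \ref{PS}: each amalgamation step keeps $A_\l$ a graph ($K_n$-free graph, $\mathcal{A}$-free hypergraph, poset) whenever $A_\mu$ is one, irreducibility of the members of $\mathcal{A}$ being what makes free amalgamation preserve $\mathcal{A}$-freeness; and being $K_n$-free, being $\mathcal{A}$-free and the poset axioms all survive increasing unions, as a violating configuration is finite and would already appear in some $A_\l$. That $<_\kappa$ extends the partial order in the poset case was checked in the course of the construction.

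The substantive part is $\omega$-homogeneity, and the engine is the combination of the coherence clause with the fact that the book-keeping function $f$ returns each triple at a cofinal set of stages. Let $\phi\colon F\to G$ be a partial isomorphism between finite substructures of $A'$, or an order-preserving one between finite substructures of $(A',<')$. Since $F\cup G$ is finite and the $A_\l$ form an increasing chain with union $A_\kappa$, there is $\nu<\kappa$ with $F,G\leq A_\nu$; then $\bigl((F,<_\kappa|F),(G,<_\kappa|G),\phi\bigr)$ occurs in the enumeration attached to every stage $\mu\geq\nu$, so, $f$ being a bijection, it is the triple considered at a set $S\subseteq\kappa$ of stages with $\card(S)=\kappa$, hence $S$ is cofinal in $\kappa$. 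At each $\mu\in S$ the construction yields an automorphism $\psi_{\mu+1}$ of $A_{\mu+1}$ extending $\phi$, order-preserving whenever $\phi$ is (in which case the linear orders were amalgamated so as to make the defining shift order-preserving), and the coherence requirement forces $\psi_{\mu+1}\subset\psi_{\mu'+1}$ for $\mu<\mu'$ in $S$. Consequently $\psi:=\bigcup_{\mu\in S}\psi_{\mu+1}$ is a well-defined bijection of $\bigcup_{\mu\in S}A_{\mu+1}=A_\kappa=A'$; it is an automorphism of $A'$, and of $(A',<')$ in the order-preserving case, because membership in a relation or in the ordering is decided on finite tuples, each of which lies in some $A_{\mu+1}$; and it extends $\phi$. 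Running this for both kinds of partial isomorphism gives $\omega$-homogeneity of $A'$ and of $(A',<')$, finishing the proof.

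The step I expect to require the most care, hidden inside the previous paragraph, is the claim that each $\psi_{\mu+1}$ really is an automorphism of $A_{\mu+1}$, order-preserving when $\phi_\mu$ is. This amounts to checking that the $\mathbb{Z}$-indexed amalgamation defining $A_{\mu+1}$ is set up symmetrically with respect to the shift $a^z\mapsto a^{z+1}$ — so that the shift, modified on the glued copies of $F_\mu$ (Case 1) or of $B$ (Case 2) by $\phi_\mu$ respectively $\psi_B$, preserves every relation and, for posets, the transitive closure — and, in the order-preserving case, that the explicit description of $<_{\mu+1}$ via the sets $(-,a^z)$ indeed renders this shift order-preserving. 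Both are routine unwindings of the formulas already displayed in the construction, together with the transitivity verification given there for $<_{\mu+1}$, so no new idea is needed beyond what is above.
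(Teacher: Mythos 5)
Your proposal is correct and follows the paper's own route: the paper offers no separate proof beyond the remark that the theorem is ``the outcome of the construction,'' and you take $(A',<')=(A_\kappa,<_\kappa)$ from that same construction and verify cardinality, type preservation, the embedding of $(A,<)$, and $\omega$-homogeneity via the coherence clause and the cofinal recurrence of each triple under the book-keeping function. Your write-up in fact makes explicit several verifications (the cardinality count, cofinality of the set $S$ of relevant stages, and that the union $\bigcup_{\mu\in S}\psi_{\mu+1}$ is a well-defined automorphism) that the paper leaves implicit.
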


If we start with $A$  a graph ($K_n$-free graph, hypergraph, $\mathcal{A}$-free hyperhraph, poset) and a linear order $<$ on $A$ such that $\age((A,<))$ is the class of all finite linearly ordered graphs ($K_n$-free graphs, hypergraphs, $\mathcal{A}$-free hyperhraphs) or posets with the linear order extending the partial order, the construction provides us with a structure for which we are able to compute the universal minimal flow of its group of automorphisms. This can easily be arranged for instance by requiring that $(A,<)$ contains a copy of the countable $\omega$-homogeneous ordered graph ($K_n$-free graph, hypergraph, $\mathcal{A}$-free hypergraph, poset).

\begin{theorem}
Let $A$ be an $\omega$-homogeneous graph ($K_n$-free graph, hypergraph, $\mathcal{A}$-free hypergraph, poset) and let $<$ be a linear ordering on $A$ (extending the partial order if $A$ is a poset) such that $(A,<)$ is $\omega$-homogeneous as well. 
Suppose that $\age(A,<)$ is the class of all finite linearly ordered graphs ($K_n$-free graphs, hypergraphs, $\mathcal{A}$-free hyperhraphs) or posets with linear orderings extending the partial order. 

If $G$ is a dense subgroup of $\aut(A),$ then the universal minimal flow of $G$ is the space of all linear orderings on $A$ (respectively the space of all linear orderings extending the partial order if $A$ is a poset).
\end{theorem}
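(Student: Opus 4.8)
The plan is to reduce everything to the machinery already set up in Section~2: the corollary to Theorem~\ref{TUM} identifies the universal minimal flow of a dense subgroup of $\aut(A)$ with the orbit closure $\overline{G<}$ as soon as $\age(A,<)$ has both the Ramsey and the ordering property, and Theorem~\ref{Torder} then lets us pin down $\overline{G<}$ inside $\lo(A)$.

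First I would verify the hypotheses of Theorem~\ref{TUM} and of its corollary. Since $A$ is one of the listed relational structures, its finitely generated substructures are finite; by assumption both $A$ and $(A,<)$ are $\omega$-homogeneous, and $<$ extends the partial order in the poset case. By hypothesis $\age(A,<)$ is the class of \emph{all} finite linearly ordered graphs ($K_n$-free graphs, hypergraphs, $\mathcal{A}$-free hypergraphs) or of all finite posets with a linear order extending the partial order. By Example~\ref{E1} this class has the Ramsey property, and, as recorded just after the definition of the ordering property, it also has the ordering property. Hence the corollary to Theorem~\ref{TUM} applies and the universal minimal flow of $G$ is $\overline{G<}$.

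Second I would compute $\overline{G<}$ by means of Theorem~\ref{Torder}: a linear ordering $<'$ on $A$ lies in $\overline{G<}$ exactly when $(B,<'|B)\in\age(A,<)$ for every $B\in\age(A)$. For graphs, $K_n$-free graphs, hypergraphs and $\mathcal{A}$-free hypergraphs, $\age(A,<)$ contains every finite substructure of $A$ equipped with an arbitrary linear ordering, so the condition is automatic and $\overline{G<}=\lo(A)$. For posets, $(B,<'|B)\in\age(A,<)$ holds precisely when $<'|B$ extends the partial order on $B$; demanding this for every finite $B\leq A$ is the same as demanding that $<'$ extend the partial order on $A$, and conversely every such $<'$ satisfies the condition. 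Thus $\overline{G<}$ is exactly the set of linear orderings on $A$ extending the partial order.

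The whole argument is essentially bookkeeping once Section~2 is in hand, and I do not anticipate a genuine obstacle. The two points that want a little care are invoking the ordering property for these ordered classes (which is quoted from the literature rather than proved here) and, in the poset case, observing that the set of linear extensions of a fixed partial order is a closed, hence compact, $G$-invariant subspace of $\lo(A)$, so that it legitimately carries the structure of the universal minimal flow.
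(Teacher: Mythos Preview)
Your proposal is correct and follows precisely the route the paper sets up: the theorem is stated in the paper without an explicit proof because it is meant to be an immediate application of the Section~2 machinery, and you have written out exactly that application---invoke Theorem~\ref{TUM} and its corollary (using Example~\ref{E1} for the Ramsey property and the remark after the ordering-property definition for the ordering property) to get that the universal minimal flow is $\overline{G<}$, then identify $\overline{G<}$ via Theorem~\ref{Torder}. The only superfluous item is your final remark about closedness of the set of linear extensions of the partial order: this is automatic, since $\overline{G<}$ is by definition a closed subset of $\lo(A)$, and Theorem~\ref{Torder} already tells you it coincides with that set.
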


\section*{Acknowledgement}
I wish to thank  Stevo Todor\v{c}evi\'{c} for proposing the project of computing universal minimal flows of groups of automorphisms of uncountable structures, for introducing me to J\'onsson structures and suggesting the present construction of $\omega$-homogeneous structures. I am also grateful to Lionel Nguyen Van Th\'e for careful reading and valuable suggestions.

\bibliographystyle{alpha}
\bibliography{Linear}

\begin{thebibliography}{KPT05}

\bibitem[Bar]{DB}
D.~Barto\v{s}ov\'{a}.
\newblock Universal minimal flows of groups of automorphisms of uncountable
  structures.
\newblock {\em Submitted}.

\bibitem[Fra54]{F}
Roland Fra{\"{\i}}ss{\'e}.
\newblock Sur l'extension aux relations de quelques propri\'et\'es des ordres.
\newblock {\em Ann. Sci. Ecole Norm. Sup. (3)}, 71:363--388, 1954.

\bibitem[Hod97]{H}
Wilfrid Hodges.
\newblock {\em A shorter model theory}.
\newblock Cambridge University Press, Cambridge, 1997.

\bibitem[J{\'o}n56]{J1}
Bjarni J{\'o}nsson.
\newblock Universal relational systems.
\newblock {\em Math. Scand.}, 4:193--208, 1956.

\bibitem[J{\'o}n60]{J2}
B.~J{\'o}nsson.
\newblock Homogeneous universal relational systems.
\newblock {\em Math. Scand.}, 8:137--142, 1960.

\bibitem[KPT05]{KPT}
A.~S. Kechris, V.~G. Pestov, and S.~Todor\v{c}evi\'c.
\newblock Fra\"\i ss\'e limits, {R}amsey theory, and topological dynamics of
  automorphism groups.
\newblock {\em Geom. Funct. Anal.}, 15(1):106--189, 2005.

\bibitem[NR77]{NR1}
Jaroslav Ne{\v{s}}et{\v{r}}il and Vojt{\v{e}}ch R{\"o}dl.
\newblock Partitions of finite relational and set systems.
\newblock {\em J. Combinatorial Theory Ser. A}, 22(3):289--312, 1977.

\bibitem[NR83]{NR2}
Jaroslav Ne{\v{s}}et{\v{r}}il and Vojt{\v{e}}ch R{\"o}dl.
\newblock Ramsey classes of set systems.
\newblock {\em J. Combin. Theory Ser. A}, 34(2):183--201, 1983.

\bibitem[She84]{S2}
Saharon Shelah.
\newblock On universal graphs without instances of {CH}.
\newblock {\em Ann. Pure Appl. Logic}, 26(1):75--87, 1984.

\bibitem[Sok10]{MS}
M.~Sokic.
\newblock Ramsey property of posets and related structures.
\newblock {\em Ph.D. Thesis}, 2010.

\end{thebibliography}

\end{document}